\theoremstyle{plain}
\newtheorem{theorem}{Theorem}[section]
\newtheorem{proposition}[theorem]{Proposition}
\newtheorem{lemma}[theorem]{Lemma}
\newtheorem{example}[theorem]{Example}
\newtheorem{definition}[theorem]{Definition}
\newcommand{\bfo}{{\bf o}}
\newcommand{\bfC}{{\mathbb C}}
\newcommand{\bfR}{{\mathbb R}}
\newcommand{\bfZ}{{\mathbb Z}}
\newcommand{\bfQ}{{\mathbb Q}}
\newcommand{\mapright}[1]{\smash{\mathop{   \hbox to 0.7cm{\rightarrowfill}}
  \limits^{#1}}}
\newcommand{\ArrowF}{{\overrightarrow{F}}}
\def\la{\langle}
\def\ra{\rangle}
\begin{document}

\title{Self-similar solutions to the mean curvature flows on Riemannian cone manifolds and special Lagrangians on
toric Calabi-Yau cones}
\author{Akito Futaki}
\address{Department of Mathematics, Tokyo Institute of Technology, 2-12-1,
O-okayama, Meguro, Tokyo 152-8551, Japan}
\email{futaki@math.titech.ac.jp}

\author{Kota Hattori}
\address{Department of Mathematics, Tokyo Institute of Technology, 2-12-1, O-okayama,
Meguro, Tokyo 152--8551, Japan} 
\email{kota-hattori@math.titech.ac.jp}

\author{Hikaru Yamamoto}
\address{Department of Mathematics, Tokyo Institute of Technology, 2-12-1,
O-okayama, Meguro, Tokyo 152-8551, Japan}
\email{yamamoto.h.ah@m.titech.ac.jp}

\subjclass[2000]{Primary 53C55, Secondary 53C21, 55N91 }
\date{June 4, 2012 }
\keywords{mean curvature flow, self-similar solution, special Lagrangian}

\begin{abstract} 
The self-similar solutions to the mean curvature flow have been defined and studied on the Euclidean space. 
In this paper we propose a general treatment of the self-similar solutions to the mean curvature flow on Riemannian cone manifolds.
As a typical result we extend the well-known result of Huisken about the asymptotic behavior for the singularities of
the mean curvature flows. We also extend results on special Lagrangian submanifolds on $\mathbb C^n$ to
the toric Calabi-Yau cones over Sasaki-Einstein manifolds.
\end{abstract}

\maketitle

\section{Introduction}

Let $F:M\times [0,T)\rightarrow V$ be a smooth family of immersions of  an $m$-dimensional manifold $M$ 
into a Riemannian manifold $(V, g)$ of dimension $m+k$. 
$F$ is called a mean curvature flow if it satisfies 
\begin{equation}\label{mcfc1}
\begin{aligned}
& \frac{\partial F}{\partial t}(p,t)=H_{t}(p) &&\mathrm{~for~all~}(p,t)\in M\times[0,T)
\end{aligned}
\end{equation}
where $H_t$ is the mean curvature of the immersion $F_t := F(\cdot, t) : M \to V$. 

When $V$ is the Euclidean space $\bfR^{m+k}$ there is a well-studied
important class of solutions of (\ref{mcfc1}), that is, self-similar solutions. They are immersions $F : M \to \bfR^{m+k}$ 
satisfying
\begin{equation}\label{mcfc2} 
H = \lambda F^\perp
\end{equation}
where $\lambda$ is a constant and $F^\perp$ denotes the normal part of the position vector $F$. The solution of (\ref{mcfc2}) is called shrinking, stationary (or minimal) or expanding
depending on whether  $\lambda < 0$, $\lambda = 0$ or $\lambda > 0$. 

The purpose of this paper is to extend the definition of the self-similar solutions from the case when $V$ is the Euclidean spaces to 
the case when $V$ is a Riemannian cone manifold.
Let $(N,g)$ be an $n$-dimensional Riemannian manifold. We define the Riemannian cone manifold 
$(C(N),\overline{g})$ over $(N,g)$ by $C(N)=N\times\mathbb{R}^{+}$ and $\overline{g}=dr^2+r^2g$ where $r$ is the 
standard coordinate of $\mathbb{R}^{+}$. If $F : M \to C(N)$ is an immersion we define the
position vector $\ArrowF$ of $F$ at $p \in M$ by
\begin{align}\label{pv}
\ArrowF(p)= r(F(p))\frac{\partial}{\partial r}~~\in T_{F(p)}C(N). 
\end{align}
Then the self-similar solution is defined as
\begin{equation}\label{mcfc3} 
H = \lambda \ArrowF^\perp
\end{equation}
where $\lambda$ is a constant and $\ArrowF^\perp$ denotes the normal part of the position vector $\ArrowF$. 
In this paper we propose a general treatment of the self-similar solutions to the mean curvature flows on Riemannian cone manifolds.
As a typical result we extend the well-known result of Huisken about the asymptotic behavior for the singularities of
the mean curvature flows. 
In \cite{Huisken} Huisken introduced the rescaling technique and the monotonicity formula for the mean curvature flow 
of hypersurfaces in Euclidean space. Also in \cite{Huisken}, using the monotonicity formula, Huisken proved that 
if the mean curvature flow has the type I singularity then there exists a smoothly convergent subsequence of the rescaling 
such that its limit satisfies the self-similar solution equation. In this paper we extend those techniques and consequences to 
Riemannian cone manifolds and an initial date manifold. We also give a construction of self-similar solutions on Riemannian cone manifolds.

Let us recall the definition of type I singularity and its parabolic rescaling.
Let $M$ be a manifold and $(V,g)$ a Riemannian manifold. 
Suppose $F:M\times[0,T)\rightarrow V$ is a mean curvature flow with maximal time $T<\infty$ of existence of the solution. 
One says that $F$ develops a singularity of Type I as $t \to T$ if there exists a constant $C > 0$ such that 
\begin{align}
\sup _{M}\vert\mathrm{II}_{t}\vert^2\leq\frac{C}{T-t}~\mathrm{for~all~}t\in[0,T), \notag
\end{align}
where $\mathrm{II}_{t}$ is the second fundamental form with respect to the immersion $F_{t}:M\rightarrow V$. 
Otherwise one says that $F$ develops a singularity of Type II. 

Let $M$ be a manifold and $(C(N),\overline{g})$ the Riemannian cone manifold over a Riemannian manifold $(N,g)$. 
Take a constant $\lambda>0$. For a map $F:M\times[0,T)\rightarrow C(N)$, we define the parabolic rescaling of $F$ of scale $\lambda$ as 
follows;
\begin{align}
&F^{\lambda}:M\times[-\lambda^2 T,0)\rightarrow C(N);\notag\\
&F^{\lambda}(p,s)=(\pi_N(F(p,T+\frac{s}{\lambda^2})),\lambda r(F(p,T+\frac{s}{\lambda^2}))) \notag
\end{align}
where $\pi_N : C(N) = N \times \bfR^+ \to N$ is the standard projection.

When the singularity does not occur at the apex of the cone one can show that the parabolic rescaling of type I singularity gives rise to a
self-similar solution as shown by Huisken. However when the singularity occurs at the apex we need some more conditions. Thus we are
lead to the following definition of  type ${\mathrm I}_c$ singularity. 

\begin{definition}\label{typeI_c}
Let $M$ be a manifold and $(N,g)$ a Riemannian manifold. 
Suppose $F:M\times[0,T)\rightarrow C(N)$ is a mean curvature flow with $T<\infty$.
We say that $F$ develops a singularity of type $\mathrm{I}_c$ if the following three conditions are satisfied:
\begin{enumerate}
\item$F$ develops a singularity of type $\mathrm{I}$ as $t \to T$,\\
\item$r(F_t(p)) \to 0$ for some $p \in M$ as $t \to T$ and\\
\item$K_1(T-t) \leq \min_{ M}r^2(F_{t}) \leq K_2(T-t)$ for all $t \in [0,T)$
where  $K_1$ and $K_2$ are 
positive constants. 
\end{enumerate}
\end{definition}
\noindent
Examples of type $\mathrm I_c$ singularities are given in Example \ref{example}. 
\begin{theorem}\label{main}
Let $M$ be an $m$-dimensional compact manifold and $C(N)$ the Riemannian cone manifold over an $n$-dimensional Riemannian manifold $(N,g)$. 
Let $F: M\times[0,T)\rightarrow C(N)$ be a mean curvature flow, and assume that $F$ develops a type $\mathrm{I}_c$ singularity at $T$. 
Then, for any increasing sequence $\{\lambda_{i}\}_{i=1}^{\infty}$ of the scales of parabolic rescaling
such that $\lambda_{i}\rightarrow\infty$ as $i\rightarrow\infty$, 
there exist a subsequence $\{\lambda_{i_{k}}\}_{k=1}^{\infty}$ and a sequence $t_{i_k} \to T$ 
such that the sequence of rescaled mean curvature flow $\{F^{\lambda_{i_{k}}}_{s_{i_k}}\}_{k=1}^{\infty}$ 
with $s_{i_k} = \lambda^2_{i_k} (t_{i_k} - T)$ converges to a self-similar solution 
$F^{\infty}:M_\infty \rightarrow C(N)$ to the 
mean curvature flow. 
\end{theorem}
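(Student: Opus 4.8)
The plan is to follow Huisken's strategy, suitably adapted to the cone. First I would set up a monotonicity formula: on the cone manifold $C(N)$, one needs a backward heat kernel analogue. For the Euclidean case Huisken uses $\rho(x,t) = (4\pi(T-t))^{-m/2}\exp(-|x|^2/4(T-t))$; the natural candidate here replaces $|x|^2$ by $r(F)^2$, the square of the distance to the apex. The key computation is to show that along a mean curvature flow in $C(N)$ the weighted area $\int_M \rho\, d\mu_t$ is monotone non-increasing, with the defect term being exactly $\int_M |H - \frac{1}{2(T-t)}\ArrowF^\perp|^2 \rho\, d\mu_t$ (up to sign/scaling conventions). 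This requires the evolution equations for the induced metric, the area form, and for $r^2$ along the flow; the cone structure $\overline g = dr^2 + r^2 g$ makes $\nabla r = \partial/\partial r$, $|\ArrowF|^2 = r^2$, and one must control cross terms coming from the curvature of $C(N)$ away from the apex.

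Second, I would record the behavior of the monotonicity quantity under parabolic rescaling $F \mapsto F^\lambda$: the defining equation \eqref{mcfc3} with $\lambda = 1/2(T-t)$ is scale-covariant, so the rescaled flows $F^{\lambda_i}$ all satisfy a mean curvature flow on $C(N)$ (here using that scaling $r \mapsto \lambda r$ is a homothety of the cone metric), and the monotonicity integral at time $s$ for $F^{\lambda_i}$ equals the monotonicity integral at the corresponding time for $F$. Combined with monotonicity, the limit $\lim_{t\to T}\int_M \rho\, d\mu_t$ exists and is finite, and one gets a uniform bound on $\int_{-a}^{-b}\int_M |H - \frac{1}{2|s|}\ArrowF^\perp|^2\rho\, d\mu^i_s\, ds \to 0$ as $i \to \infty$, on every compact time interval away from $0$.

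Third comes the compactness argument. Here is where the Type $\mathrm I_c$ hypothesis does its work: condition (1) (Type I) gives, after rescaling, a uniform bound $|\mathrm{II}|^2 \le C$ on the rescaled flows on a fixed time interval, and by differentiating the flow equation and standard interior estimates, uniform bounds on all derivatives of $\mathrm{II}$; conditions (2) and (3) guarantee that the rescaled immersions do not escape to the apex or to infinity — $\min_M r^2(F^{\lambda_i}_s)$ stays pinned between $K_1|s|$ and $K_2|s|$, so the images stay in a fixed compact annulus of $C(N)$ (for $s$ in a fixed interval), where the cone metric is smooth and complete. Then I would pick basepoints $t_{i_k}$, pass to a subsequence using the Arzelà–Ascoli/Cheeger–Gromov-type compactness for immersions with bounded geometry, and extract a smooth limit flow $F^\infty : M_\infty \times (\text{interval}) \to C(N)$. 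Finally, since the defect integral vanishes in the limit, $F^\infty$ satisfies $H = \lambda \ArrowF^\perp$ with $\lambda = -1/2$ at each time slice, i.e. it is a (shrinking) self-similar solution; a standard argument shows that a self-similar solution is, up to the obvious homothetic time-dependence, a fixed immersion, giving $F^\infty : M_\infty \to C(N)$ as stated.

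The main obstacle I anticipate is twofold. The analytic heart is establishing the monotonicity formula on $C(N)$ with the correct sign: on a general cone the ambient Ricci curvature and the Hessian of $r^2$ contribute error terms, and one must check that the cone geometry (where $\mathrm{Hess}(r^2/2) = \overline g$ exactly, since $C(N)$ is a metric cone) makes these terms cancel just as in $\bfR^{m+k}$ — this is the reason the result works for cones specifically and not arbitrary ambient manifolds. The second, more delicate point is the apex: the backward heat kernel $\rho$ and the flow are singular there, and one must use condition (3) to ensure the rescaled flows stay a definite distance $\sqrt{K_1|s|}$ from the apex so that all the smooth-compactness machinery applies on the annular region; without (3) the limit could collapse into the apex where $C(N)$ is not a manifold, and the extraction of a smooth limit would fail.
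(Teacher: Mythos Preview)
Your proposal is correct and matches the paper's proof essentially step for step: the monotonicity formula on $C(N)$ with weight $\rho_T(r(F),t)$ (which works precisely because $\mathrm{Hess}(r^2/2)=\overline g$ on a metric cone), scale-invariance of the weighted area under parabolic rescaling, extraction of a time sequence $s_i$ on which the defect integral tends to zero, and then pointed smooth compactness using the type $\mathrm I$ bound together with condition (c) to keep basepoints away from the apex. Two small refinements to be aware of: condition (c) bounds only $\min_M r^2$, so you get that the basepoints (not the full images) stay in a compact region, and to invoke a compactness theorem requiring a complete ambient space the paper doubles the truncated cone $\{r>\sqrt\gamma/2\}$ across its boundary rather than working directly in $C(N)$.
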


The proof of this theorem is not substantially different from Huisken's original proof. But the merit of the idea to study on cones will be that
we obtain examples of more non-trivial topology. In fact $N \cong \{r = 1\}$ in $C(N)$ is already a self-shrinker. Thus, any compact manifold
can be a self-shrinker in some Riemannian cone manifold. It is also possible to 
study special Lagrangian submanifolds and Lagrangian self-similar solutions in Calabi-Yau cones over Sasaki-Einstein manifolds.
A Sasaki manifold $N$ is by definition an odd dimensional Riemannian manifold whose cone $C(N)$ is a K\"ahler manifold. If the K\"ahler cone manifold
is toric then the Sasaki manifold is said to be toric. 
It is proven in \cite{FOW} and \cite{CFO} that a Sasaki-Einstein metric exists on a toric Sasaki manifold obtained from a toric diagram.
A typical example is when $N$ is the
standard sphere of real dimension $2m+1$. Then its cone is $\bfC^{m+1} - \{\bfo\}$. It is natural to expect that 
we can extend results on special Lagrangian submanifolds or self-similar solutions on $\mathbb C^{m+1}$ to
these toric Calabi-Yau cones of height $1$. In Theorem \ref{SLagtoric} we 
construct examples of complete special 
Lagrangian manifolds on toric Calabi-Yau cones
using the ideas of \cite{IonelMinoo08} and \cite{Kawai11}. 
This construction includes the examples given in  Theorem 3.1 in III.3 of Harvey-Lawson \cite{HarveyLawson82Acta}.
Further construction of examples of special Lagrangians and Lagrangian self-similar solutions are given in the third author's subsequent paper \cite{Yamamoto},
in which it is shown that, for any positive integer $g$, there are toric Calabi-Yau 3-dimensional cones including Lagrangian
self-shrinkers diffeomorphic to $\Sigma_g \times S^1$ where $\Sigma_g$ is a compact orientable surface of genus $g$.

In section 8 we also study the infinitesimal deformations of special Lagrangian cone $C(\Sigma) \subset C(N)$ over a Legendrian 
submanifold $\Sigma$ in a Sasaki-Einstein manifold $N$. We show that the parameter space $\mathcal{H}_{C(\Sigma)}$ 
of those infinitesimal deformations is isomorphic to
\begin{eqnarray}
{\rm Ker}(\Delta_\Sigma - 2n) = \{\varphi \in C^{\infty}(\Sigma);\ \Delta_\Sigma \varphi = 2n \varphi \},\nonumber
\end{eqnarray}
see Theorem \ref{infin.deform.sp}.
This is also proved by Lemma 3.1 of \cite{Ohnita}, although the proof in this paper is different from \cite{Ohnita}.

This paper is organized as follows. In section 2 we show fundamental formulas on mean curvature flows in 
Riemannian cone manifolds. In section 3 we show the finite time blowup of the mean curvature from
a compact manifold (Theorem \ref{blowup}).  Section 4 
is devoted to the proof of the monotonicity formula (Theorem \ref{mnf}). In section 5 we see that the type $\mathrm I$ singularity is preserved
under parabolic rescaling. In section 6 we see that we obtain a self-similar solution by parabolic rescaling at a type $\mathrm I_c$ singularity.
In section 7 we construct special Lagrangians in toric Calabi-Yau cones. In section 8 we study the infinitesimal deformations of
special Lagrangian cones in Calabi-Yau cones.


\section{Self-similar solutions to the mean curvature flows on Riemannian cone manifolds}

Let $F : M \to V$ be an immersion of  an $m$-dimensional manifold $M$ 
into an $m+k$-dimensional Riemannian manifold $(V, g)$. Thus the differential $F_{\ast x}:T_{x}M \to T_{F(x)}V$ is injective 
for every $x\in M$, and we have a natural orthogonal decomposition 
of the vector bundle 
$$F^{\ast}(TV) \cong TM \oplus T^{\bot}M$$
where $T^{\bot}M \to M$ 
is the normal bundle. 
Denote by $\bot$ (resp. $\top$) the projection
$\bot:F^{\ast}(TV)\rightarrow T^{\bot}M$ (resp. $\top:F^{\ast}(TV)\rightarrow TM$). The second 
fundamental form $\mathrm{II}$ of the immersion $F:M\rightarrow V$ is a section of the vector bundle
$T^{\bot}M\otimes(\otimes^{2}T^{*}M)$ defined by $\mathrm{II}(X,Y)=(\nabla_{F_{*}(X)}F_{*}(Y))^{\bot}$ 
for $X,Y\in\Gamma(TM)$. Here $\nabla$ is the Levi-Civita connection of $(V, g)$. 
The mean curvature vector field $H$ of $F:M\rightarrow V$ is a section of $T^{\bot}M$ defined by 
$H=\mathrm{tr}\ \mathrm{II}$, where the trace is taken with respect to the Riemannian metric 
$F^{*}(g)$ on $M$.

For the actual computations one often needs local expressions of the mean curvature vector. 
Let $x^1,\ \cdots, x^m$ and $y^1, \cdots, y^n$ be local coordinate charts around $p\in U\subset M$ and 
$F(p)\in U' \subset V$ such that $F\vert_{U}:U\rightarrow U'$ is an embedding. 
Write $F^\alpha(x^1, \cdots, x^m) = y^\alpha(F(x^1, \cdots, x^m))$. Then we have the induced metric
\begin{align}
g_{ij}=\frac{\partial F^{\alpha}}{\partial x^{i}}\frac{\partial F^{\beta}}{\partial x^{j}}g_{\alpha\beta}.\notag
\end{align}
where $g=g_{\alpha\beta}\,dy^{\alpha}\otimes dy^{\beta}$ is the Riemannian metric on $U' \subset V$.
Here we use the indices $i,\ j,\ k, \ldots$ to denote the coordinates on $M$ and 
$\alpha,\ \beta,\ \gamma, \ldots$ to denote the coordinates on $V$.
The coefficients $H^{\alpha}$ of the  mean curvature vector field
\begin{align}
H=H^{\alpha}\frac{\partial}{\partial y^{\alpha}}\notag
\end{align}
are given by the Gau\ss' formula 
\begin{align}\label{mcv}
H^{\alpha}=g^{ij}\biggl(\frac{\partial^{2}F^{\alpha}}{\partial x^{i}\partial{x^{j}}}
-\Gamma_{ij}^{k}\frac{\partial F^{\alpha}}{\partial x^{k}}+
\Gamma_{\beta\gamma}^{\alpha}\frac{\partial F^{\beta}}{\partial x^{i}}\frac{\partial F^{\gamma}}{\partial x^{j}}\biggr).
\end{align}

Next we consider a smooth family of immersions $F:M\times (a,b)\rightarrow V$. Namely,
for every time $t$ in $(a,b) \subset \bfR$, $F_{t}:M\rightarrow V$ given by $p\mapsto F(p,t)$ is an immersion. 
We denote by $g_{t}$ the Riemannian metric $F_{t}^{*}(g)$ over $M$. 
For a fixed time $t_{0}$ in $(a,b)$, the variation vector field $(\partial F/\partial t)(\cdot,t_{0})$, considered as a section of $F_{t_0}^\ast TV$,
 is decomposed as
$$(\partial F/\partial t)(\cdot,t_{0}) = v_{t_{0}}^{\bot} + v_{t_{0}}^{\top}$$
where $v_{t_{0}}^{\bot}(p)$ and  $v_{t_{0}}^{\top}$ are respectively the sections of $T^\bot M$ and $TM$.

We denote by 
$\nabla^{t}$, $\mathrm{div}_{t}$, $\mathrm{II}_{t}$ and $H_{t}$ respectively 
the Levi-Civita connection on $(M,g_{t})$, 
the divergence with respect to $g_{t}$, 
the second fundamental form and 
the mean curvature vector field of the immersion $F_{t}:M\rightarrow V$. 

Then following proposition is well-known as the ``first variation formula". 
\begin{proposition}\label{fvf}
For every $p$ in $M$, two tangent vectors $X,Y$ at $p$ and a compactly supported
integrable function $f$ on $M$, we have 
\begin{align}
& \frac{d}{dt}\biggl\vert_{t=t_{0}}g_{t}(X,Y)
=g_{t_{0}}(\nabla^{t_{0}}_{X}v_{t_{0}}^{\top},Y)
+g_{t_{0}}(X,\nabla^{t_{0}}_{Y}v_{t_{0}}^{\top})
-2g(\mathrm{II}_{t_{0}}(X,Y),v_{t_{0}}^{\bot}(p))\notag\\
& \frac{d}{dt}\biggl\vert_{t=t_{0}}\int_{M}fdv_{g_{t}}
=\int_{M}f(\mathrm{div}_{t_{0}}(v_{t_{0}}^{\top})
-g(H_{t_{0}},v_{t_{0}}^{\bot}))dv_{g_{t_{0}}}. \notag
\end{align}
\end{proposition}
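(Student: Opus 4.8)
The plan is to localize: fix coordinates $x^1,\dots,x^m$ near $p\in M$, set $e_i=\partial/\partial x^i$, and reduce both statements to the evolution of the induced metric coefficients $g_{ij}(t)=g(F_\ast e_i,F_\ast e_j)$. I would regard the variation field $V:=\partial F/\partial t$ as a section of $F^\ast(TV)$ over $M\times(a,b)$ and use the pullback of the Levi-Civita connection $\nabla$ of $(V,g)$, which is compatible with $g$. Since the coordinate fields $\partial/\partial t$ and $e_i$ on $M\times(a,b)$ commute and $\nabla$ is torsion free, one has $\nabla_{\partial/\partial t}(F_\ast e_i)=\nabla_{e_i}(F_\ast(\partial/\partial t))=\nabla_{e_i}V$, whence
\begin{align}
\frac{\partial}{\partial t}g_{ij}=g(\nabla_{e_i}V,F_\ast e_j)+g(F_\ast e_i,\nabla_{e_j}V).\notag
\end{align}

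Next I would substitute $V=F_\ast(v_{t}^{\top})+v_{t}^{\bot}$ and split into tangential and normal pieces. The Gauss formula identifies the tangential component of $\nabla_{e_i}F_\ast(v^{\top})$ with $F_\ast(\nabla^{t}_{e_i}v^{\top})$, so its pairing with $F_\ast e_j$ equals $g_t(\nabla^{t}_{e_i}v^{\top},e_j)$; likewise the normal component of $\nabla_{e_i}F_\ast e_j$ is $\mathrm{II}_t(e_i,e_j)$, so differentiating $g(v^{\bot},F_\ast e_j)=0$ along $e_i$ gives $g(\nabla_{e_i}v^{\bot},F_\ast e_j)=-g(v^{\bot},\mathrm{II}_t(e_i,e_j))$. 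Combining the four resulting terms yields
\begin{align}
\frac{\partial}{\partial t}g_{ij}=g_t(\nabla^{t}_{e_i}v^{\top},e_j)+g_t(e_i,\nabla^{t}_{e_j}v^{\top})-2g(\mathrm{II}_t(e_i,e_j),v^{\bot}).\notag
\end{align}
Both sides being tensorial in $e_i,e_j$, evaluating at $p$ with $e_i=X$, $e_j=Y$ gives the first formula.

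For the volume form I would write $dv_{g_t}=\sqrt{\det(g_{ij}(t))}\,dx^1\wedge\cdots\wedge dx^m$ and use $\partial_t\sqrt{\det g}=\tfrac12\sqrt{\det g}\,g^{ij}\partial_t g_{ij}$. Tracing the evolution just obtained, the two tangential terms combine to $g^{ij}g_t(\nabla^{t}_{e_i}v^{\top},e_j)=\mathrm{div}_{t}(v^{\top})$ and the normal term to $-g^{ij}g(\mathrm{II}_t(e_i,e_j),v^{\bot})=-g(H_t,v^{\bot})$, so $\partial_t(dv_{g_t})=(\mathrm{div}_t v^{\top}-g(H_t,v^{\bot}))\,dv_{g_t}$. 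Since $f$ does not depend on $t$, is compactly supported, and $F$ is smooth in $(p,t)$, differentiation under the integral sign is legitimate and the second formula follows.

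The computation is the classical one and presents no genuine obstacle; the only points that deserve explicit attention are the commutation $\nabla_{\partial/\partial t}F_\ast e_i=\nabla_{e_i}V$ (using torsion-freeness of $\nabla$ together with $[\partial/\partial t,e_i]=0$) and the use of the Gauss formula to split ambient covariant derivatives into their tangential part, which is the Levi-Civita connection of $g_t$, and their normal part, which is the second fundamental form — exactly as in the Euclidean case, but now carried out for a general ambient Riemannian manifold $(V,g)$.
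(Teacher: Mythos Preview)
Your argument is correct and is the standard proof of the first variation formula. Note that the paper does not actually supply a proof of this proposition: it simply labels the statement as ``well-known'' and moves on. Your computation via the pullback connection, the commutation $\nabla_{\partial/\partial t}F_\ast e_i=\nabla_{e_i}V$, and the Gauss decomposition is exactly the classical derivation one would give, so there is nothing to compare against in the paper itself.
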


Let 
$F:M\times [0,T)\rightarrow V$ be evolving by mean curvature flow with initial condition 
$F_{0}:M\rightarrow V$:
\begin{equation}\label{mcfc}
\begin{aligned}
& \frac{\partial F}{\partial t}(p,t)=H_{t}(p) &&~for~all~(p,t)\in M\times[0,T)\\
& F(p,0)=F_{0}(p) &&~for~all~p\in M.
\end{aligned}
\end{equation}

Applying the first variation formula in Proposition \ref{fvf} to the mean curvature flows, we obtain 
following well-known properties for mean curvature flows. 
\begin{proposition}
If $F:M\times [0,T)\rightarrow V$ is a mean curvature flow then the following equation holds.
\begin{align}
\frac{d}{dt}\biggl\vert_{t=t_{0}}\sqrt{\det(g_{t,ij})}=-\vert H_{t_{0}} \vert_{g}^2\sqrt{\det(g_{t_{0},ij})}. \label{kihonn}
\end{align}
If M is compact we also have
\begin{align}
\frac{d}{dt}\biggl\vert_{t=t_{0}}\mathrm{Vol}_{g_{t}}(M)=-\int_{M}\vert H_{t_{0}}\vert_{g}^2\,dv_{g_{t_{0}}}.\notag
\end{align}
\end{proposition}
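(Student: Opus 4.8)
The plan is to derive both identities from the first variation formula of Proposition \ref{fvf}, specialized to the situation where the variation vector field of the family is the mean curvature vector itself. The crucial observation is that along a mean curvature flow one has $\partial F/\partial t = H_t$, and since $H_t$ is a section of the normal bundle $T^{\bot}M$, the decomposition $(\partial F/\partial t)(\cdot,t_0) = v_{t_0}^{\bot} + v_{t_0}^{\top}$ collapses to $v_{t_0}^{\bot} = H_{t_0}$ and $v_{t_0}^{\top} = 0$.

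To obtain the pointwise formula (\ref{kihonn}) I would fix local coordinates $x^1,\dots,x^m$ on $M$ near $p$ and apply the first identity of Proposition \ref{fvf} with $X = \partial/\partial x^i$, $Y = \partial/\partial x^j$; the two terms containing $v_{t_0}^{\top}$ vanish, leaving
\begin{align}
\frac{\partial}{\partial t}\biggl\vert_{t=t_0} g_{t,ij} = -2\,g\bigl(\mathrm{II}_{t_0}(\partial_i,\partial_j),\,H_{t_0}\bigr). \notag
\end{align}
Then Jacobi's identity $\frac{d}{dt}\det(g_{t,ij}) = \det(g_{t,ij})\,g_t^{ij}\,\partial_t g_{t,ij}$, together with the chain rule for the square root, gives
\begin{align}
\frac{d}{dt}\biggl\vert_{t=t_0}\sqrt{\det(g_{t,ij})} = \frac12\sqrt{\det(g_{t_0,ij})}\;g_{t_0}^{ij}\,\frac{\partial}{\partial t}\biggl\vert_{t=t_0}g_{t,ij} = -\sqrt{\det(g_{t_0,ij})}\;g_{t_0}^{ij}\,g\bigl(\mathrm{II}_{t_0}(\partial_i,\partial_j),\,H_{t_0}\bigr). \notag
\end{align}
Since $g_{t_0}^{ij}\,\mathrm{II}_{t_0}(\partial_i,\partial_j) = \mathrm{tr}\,\mathrm{II}_{t_0} = H_{t_0}$ by the definition of the mean curvature vector, the right-hand side equals $-|H_{t_0}|_g^2\,\sqrt{\det(g_{t_0,ij})}$, which is exactly (\ref{kihonn}).

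For the volume statement, when $M$ is compact the constant function $f\equiv 1$ is compactly supported and integrable, so I would apply the second identity of Proposition \ref{fvf} directly with this $f$, $v_{t_0}^{\top} = 0$ and $v_{t_0}^{\bot} = H_{t_0}$, obtaining $\frac{d}{dt}\biggl\vert_{t=t_0}\mathrm{Vol}_{g_t}(M) = -\int_M |H_{t_0}|_g^2\,dv_{g_{t_0}}$; equivalently, one integrates (\ref{kihonn}) over $M$, the two being consistent because $dv_{g_t}$ is locally $\sqrt{\det(g_{t,ij})}\,dx$. I do not anticipate any real obstacle here: the only genuine content is the observation that the flow velocity is purely normal, after which the argument is a routine combination of Proposition \ref{fvf} with the standard formula for the derivative of a determinant.
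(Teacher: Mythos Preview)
Your proof is correct and follows essentially the same route as the paper: both arguments identify $v_{t_0}^{\top}=0$, $v_{t_0}^{\bot}=H_{t_0}$ along the mean curvature flow, apply the first identity of Proposition~\ref{fvf} to obtain $\partial_t g_{t,ij}=-2g(\mathrm{II}_{t_0,ij},H_{t_0})$, invoke the derivative-of-determinant formula for (\ref{kihonn}), and take $f\equiv 1$ in the second identity of Proposition~\ref{fvf} for the volume statement. Your write-up is simply a bit more explicit about the Jacobi-formula step and the identification $g^{ij}\mathrm{II}_{ij}=H$.
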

\begin{proof}
Because we consider the mean curvature flow, $v_{t_{0}}=H_{t_{0}}$ and therefore
$$v_{t_{0}}^{\top}=0 \qquad \mathrm{and}\qquad v_{t_{0}}^{\bot}(p)=H_{t_{0}}(p).$$ 
It then follows from Proposition \ref{fvf} that 
\begin{align}
& \frac{d}{dt}\biggl\vert_{t=t_{0}}g_{t,ij}
=-2g(\mathrm{II}_{t_{0},ij},H_{t_{0}}).\notag
\end{align}
Then the first formula (\ref{kihonn}) follows from the well-known formula for the derivative of the determinant.
To prove second formula, simply let $f\equiv 1$ on $M$ in the first variation formula. 
\end{proof}


Recall that, 
for an $n$-dimensional Riemannian manifold $(N,g)$, we define the Riemannian cone manifold 
$(C(N),\overline{g})$ over $(N,g)$ by $C(N)=N\times\mathbb{R}^{+}$ and $\overline{g}=dr^2+r^2g$ where $r$ is the 
standard coordinate of $\mathbb{R}^{+}$. Note that $C(S)$ does not contain the apex.

The most typical example of a cone is the case when $N$ is the standard sphere $S^{n}$ in $\bfR^{n+1}$. In this case the cone is
$\bfR^{n+1} - \{\bfo\}$. For a map $F:M\rightarrow \bfR^{n+1}$, one can consider the position vector of $F(p)$ for $p \in M$, and using it, one can
define self-similar solutions
$$ H = \lambda F^{\bot}$$ 
where $\lambda$ is a constant.

We can extend this idea to maps into Riemannian cone manifolds.
Namely, 
for a smooth map $F:M\rightarrow C(N)$ and $p$ in $M$, 
we define the position vector $\ArrowF$ of $F$ at $p \in M$ by
\begin{align}
\ArrowF(p)= r(F(p))\frac{\partial}{\partial r}~~\in T_{F(p)}C(N). \notag
\end{align}
With respect to the bundle decomposition of 
$$ F^\ast T_{F(p)}C(N)  \cong T_{p}M \oplus T^{\bot}_{p}M,$$
we decompose $\ArrowF(p)$ as
$$ \ArrowF(p) = \ArrowF^{\top}(p) + \ArrowF^{\bot}(p).$$
Then we can
define self-similar solutions by
$$ H = \lambda \ArrowF^{\bot}.$$

For a Riemannian cone manifold $(C(N),\overline{g})$ over an $n$-dimensional Riemannian manifold $(N,g)$ and a point $q$ in $C(N)$, 
local coordinates $(y^{\alpha})_{\alpha=1}^{n+1}$ around $q$ are said to be
{\it associated with normal local coordinates} of $N$ when the part of coordinate $(y^{\alpha})_{\alpha=1}^{n}$ becomes 
normal local coordinates of $(N,g)$ around $\pi_N(q)$ and $y^{n+1}$ is the standard coordinate of $\mathbb{R}^{+}$, that is, $y^{n+1}=r$.  
Here, $\pi_N$ is the projection of the cone manifold $C(N) \cong N \times \bfR^+$ onto the first factor $N$.

Note that under local coordinates associated with normal local coordinates of $N$, we have $r\circ F= r(F) = F^{n+1}$ for a given map $F:M\rightarrow C(N)$. 

Let $(x^{i})_{i=1}^{m}$ be normal local coordinates centered at $p$ of the Riemannian manifold $(M,F^{*}(\overline{g}))$, 
and $(y^{\alpha})_{\alpha=1}^{n+1}$ local coordinates of $(C(N),\overline{g})$ associated with normal local coordinates centered at $\pi_N(F(p))$ 
of $(N,g)$. 
Then calculating only $(n+1)$-th coefficient $H^{n+1}(p)$ of mean curvature vector at $p$, namely, the coefficient of $\partial/\partial y^{n+1}(=\partial/\partial r)$, 
for the local expression of the mean curvature vector (\ref{mcv}), we obtain the following local expression for $H^{n+1}(p)$; 
\begin{align}
H^{n+1}(p)=\sum_{i=1}^{m}\frac{\partial^2 r(F)}{{\partial x^{i}}^2}(p) - r(F(p))\sum_{i=1}^{m}\sum_{\alpha=1}^{n}\biggl(\frac{\partial F^{\alpha}}{\partial x^{i}}(p)\biggr)^2.\label{h}
\end{align}
This easily follows from 
$$ \Gamma^{n+1}_{\alpha\beta} = - r g_{\alpha\beta}$$
for $1 \le \alpha,\ \beta \le n$.
\section{Finite time singularity for mean curvature flows}

If the ambient space is the Euclidean space $\mathbb{R}^{m+k}$ and an initial date manifold $M$ is compact, then the mean curvature flow does not have
a long time solution. It is a well-known result of Huisken:
\begin{theorem}[Huisken \cite{Huisken}]\label{Huisken}
Let $F_0:M\rightarrow \mathbb{R}^{m+k}$ be an immersion of a compact $m$-dimensional manifold $M$. 
Then the maximal time $T$ of existence of a solution $F:M\times[0,T)\rightarrow \mathbb{R}^{m+k}$
of the mean curvature flow with initial immersion $F_0$ is finite.
\end{theorem}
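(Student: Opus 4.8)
The plan is to imitate the classical maximum-principle argument of Huisken, adapted only to record that we do not care whether the singularity sits at the apex. First I would fix a mean curvature flow $F:M\times[0,T)\to\mathbb R^{m+k}$ with compact initial immersion $F_0$, and consider the scalar function
$$
\varphi(p,t) = |F(p,t)|^2.
$$
A direct computation, using the evolution equation $\partial F/\partial t = H$ together with the Gau\ss\ formula and the fact that in $\mathbb R^{m+k}$ one has the well-known identity $\Delta_{g_t} F = H$ (the Laplacian being taken componentwise with respect to the induced metric $g_t$), gives
$$
\frac{\partial \varphi}{\partial t} = 2\,g\!\left(H,F\right) = \Delta_{g_t}\varphi - 2m.
$$
The key step is this computation; everything afterward is the maximum principle. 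Indeed, setting $\psi(t) = \max_{p\in M}\varphi(p,t)$, which is finite since $M$ is compact, the evolution equation together with the sign of the $-2m$ term yields, in the barrier/viscosity sense,
$$
\frac{d}{dt}\psi(t) \le -2m,
$$
because at an interior spatial maximum $\Delta_{g_t}\varphi \le 0$.

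From here the argument is immediate. Integrating the differential inequality gives $\psi(t)\le \psi(0) - 2mt$ for all $t\in[0,T)$. Since $\varphi\ge 0$, so is $\psi$, hence $0\le \psi(0)-2mt$, which forces
$$
t \le \frac{\psi(0)}{2m} = \frac{\max_{M}|F_0|^2}{2m}
$$
for every $t<T$. Therefore $T\le \max_M|F_0|^2/(2m)<\infty$. I would include the short standard justification that $\psi$ is Lipschitz and its upper Dini derivative is bounded by $\max_{\{p:\varphi(p,t)=\psi(t)\}}\partial_t\varphi(p,t)$, so that the comparison with the ODE $\psi'=-2m$ is legitimate even though $M$ may not be embedded and the maximum point need not be unique.

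The only mild subtlety — not really an obstacle — is that $M$ is merely immersed, not embedded, so ``position vector'' and ``$|F|^2$'' must be read as the pullback $|F(p,t)|^2$ of the squared distance to the origin; this is exactly the viewpoint already set up in the paper's notation $\ArrowF$, and nothing about the computation of $\partial_t\varphi$ or the maximum principle requires embeddedness. I would remark that this is the special case of the cone $C(S^m)=\mathbb R^{m+1}\setminus\{0\}$ of the more general finite-time blow-up statement to be proved in the next section (Theorem \ref{blowup}), so the proof here is really a warm-up recording the Euclidean prototype before the cone version, where the term $-2m$ gets replaced by a term involving the mean curvature of $N$ inside $C(N)$.
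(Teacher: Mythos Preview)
Your argument is correct and is exactly the paper's approach: the paper phrases it as applying the parabolic maximum principle to $f=|F|^2+2mt$, which satisfies $\partial_t f=\Delta_{g_t}f$, yielding the identical bound $T\le \frac{1}{2m}\max_M|F_0|^2$; this is just your inequality $\partial_t\varphi\le\Delta_{g_t}\varphi-2m$ with the $2mt$ absorbed into the test function. One small correction to your closing remark: in the cone generalization (Theorem~\ref{blowup}) the constant is still exactly $-2m$ --- Lemma~\ref{lem1} gives $\Delta r^2(F)=2(\overline g(H,\ArrowF)+m)$ regardless of $N$, so the same function $r^2(F_t)+2mt$ satisfies the heat equation and nothing involving the geometry of $N$ enters; also the relevant cone is $C(S^{m+k-1})=\mathbb R^{m+k}\setminus\{0\}$, not $C(S^m)$.
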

The proof follows by applying the parabolic maximum principle to the function
$f=|F|^2+2mt$ which satisfies the  evolution equation $\frac{d}{dt}f=\Delta f$. One can show 
$T\leq\frac{1}{2m}\max|F_0|^2$, from which Theorem \ref{Huisken} follows. Using the position vector in a cone as defined in (\ref{pv}), 
we can extend this result when 
the ambient space is a Riemannian cone manifold as follows.
\begin{theorem}\label{blowup}
Let $(C(N),\overline{g})$ be the Riemannian cone manifold over a Riemannian manifold $(N,g)$ of 
dimension $n$, $M$ a
 compact manifold of dimension $m$ and 
$F:M\times[0,T)\rightarrow C(N)$ a mean curvature flow with initial condition $F_{0}:M\rightarrow C(N)$. 
Then the maximal time $T$ of existence of the mean curvature flow is finite. 
\end{theorem}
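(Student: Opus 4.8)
The plan is to mimic Huisken's argument for the Euclidean case (Theorem~\ref{Huisken}), replacing the squared position vector $|F|^2$ by its natural cone analogue $r(F)^2 = |\ArrowF|^2$ and showing that it satisfies the very same heat-type evolution equation; the finiteness of $T$ then follows from the parabolic maximum principle on the compact manifold $M$.

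First I would establish the evolution equation
\[
\frac{\partial}{\partial t}\,r(F_t)^2 \;=\; \Delta_{g_t}\!\bigl(r(F_t)^2\bigr) - 2m,
\]
where $g_t = F_t^\ast(\overline{g})$ and $\Delta_{g_t}$ is its Laplace--Beltrami operator. For the time derivative, since $\partial F/\partial t = H_t$ and $dr = \overline{g}(\partial/\partial r,\,\cdot\,)$ on $C(N)$, one gets $\tfrac{\partial}{\partial t} r(F_t)^2 = 2\,r(F_t)\,dr(H_t) = 2\,\overline{g}(\ArrowF^{\bot}, H_t)$. For the Laplacian I would fix $p\in M$ and work in normal coordinates $(x^i)$ of $(M,g_t)$ centered at $p$ together with coordinates of $C(N)$ associated with normal coordinates of $N$, so that $\Delta_{g_t}(r(F_t)^2)(p) = \sum_i \partial_{x^i}^2(r(F_t)^2)$. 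Combining this with the local expression (\ref{h}) for $H^{n+1}$ (equivalently the cone identity $\Gamma^{n+1}_{\alpha\beta} = -r g_{\alpha\beta}$) and with the orthonormality relation $m = \sum_i |F_{t\ast}\partial_{x^i}|^2_{\overline{g}} = r(F_t)^2\sum_{i,\,\alpha\le n}(\partial_{x^i}F^\alpha)^2 + |\nabla^{g_t} r(F_t)|^2$, a short computation yields $\Delta_{g_t}(r(F_t)^2) = 2m + 2\,\overline{g}(\ArrowF^{\bot}, H_t)$, which together with the expression for the time derivative gives the claimed equation. This is the exact analogue of $\partial_t|F|^2 = \Delta|F|^2 - 2m$ on $\bfR^{m+k}$.

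Next, put $f := r(F_t)^2 + 2mt$ on $M\times[0,T)$. The evolution equation says $\partial_t f = \Delta_{g_t} f$, so $f$ solves a heat equation (with time-dependent coefficients) on the compact manifold $M$. By the parabolic maximum principle, $t\mapsto \max_M f(\cdot,t)$ is nonincreasing; since $r>0$ on $C(N)$ this gives
\[
2mt \;<\; \max_M r(F_t)^2 + 2mt \;=\; \max_M f(\cdot,t) \;\le\; \max_M f(\cdot,0) \;=\; \max_M r(F_0)^2
\]
for every $t\in[0,T)$, hence $T \le \tfrac{1}{2m}\max_M r(F_0)^2 < \infty$.

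The only step requiring genuine work is the first one: verifying that the curvature contributions of the cone metric in $\Delta_{g_t}(r(F_t)^2)$ conspire to produce precisely the constant $-2m$, i.e.\ that, as far as the radial function is concerned, the cone over $(N,g)$ behaves exactly like Euclidean space. This is where the identity $\Gamma^{n+1}_{\alpha\beta} = -r g_{\alpha\beta}$ (equivalently $\mathrm{Hess}_{\overline{g}}\,r = r\,g$) is essential. Once this is in hand, the maximum-principle argument is identical to Huisken's and presents no obstacle; note that, unlike in the Euclidean case, finiteness of $T$ does not by itself locate the singularity at the apex, which is why Definition~\ref{typeI_c} is needed later.
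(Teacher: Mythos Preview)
Your proposal is correct and follows essentially the same approach as the paper: the paper isolates the two ingredients as separate lemmas, namely $\Delta_t r^2(F_t) = 2(\overline{g}(H_t,\ArrowF_t)+m)$ and $\partial_t r^2(F_t) = 2\,\overline{g}(H_t,\ArrowF_t)$, then combines them into $\partial_t f = \Delta_t f$ for $f = r^2(F_t) + 2mt$ and applies the maximum principle exactly as you do. Your use of $\ArrowF^{\bot}$ in place of $\ArrowF$ is harmless since $H_t$ is normal, and your coordinate computation via (\ref{h}) and the orthonormality identity is precisely the content of the paper's Lemma~\ref{lem1}.
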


Before the proof of this theorem, we want to prepare some lemmas. 
\begin{lemma}\label{lem1}
Let $(C(N),\overline{g})$ be a Riemannian cone manifold over a Riemannian manifold $(N,g)$ of dimension $n$ and 
$F:M\rightarrow C(N)$ an immersion of a manifold $M$ of dimension $m$. Then the following equation holds.
\begin{align}
\Delta (r^2(F)) =2(\overline{g}(H,\ArrowF)+m),\notag
\end{align}
where $\Delta$ is the Laplacian on $(M,F^{*}(\overline{g}))$.
\end{lemma}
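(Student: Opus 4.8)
The plan is to compute $\Delta(r^2(F))$ pointwise using a convenient choice of local coordinates, namely the normal local coordinates on $M$ and coordinates on $C(N)$ associated with normal local coordinates of $N$ that were set up in Section 2. Fix $p \in M$. In such coordinates, centered appropriately, the Christoffel symbols of $(M, F^*(\overline g))$ vanish at $p$, so at that point $\Delta(r^2(F)) = \sum_{i=1}^m \partial^2(r^2(F))/\partial (x^i)^2$, and similarly the trace in the induced metric is just $\sum_i$ since $g_{ij}(p) = \delta_{ij}$. This reduces the computation to differentiating $r^2(F) = (F^{n+1})^2$ twice.

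First I would write $\partial^2((F^{n+1})^2)/\partial(x^i)^2 = 2 F^{n+1} \, \partial^2 F^{n+1}/\partial(x^i)^2 + 2(\partial F^{n+1}/\partial x^i)^2$ and sum over $i$. The first group of terms, $2 r(F) \sum_i \partial^2 r(F)/\partial(x^i)^2$, is recognized via the local formula (\ref{h}) for $H^{n+1}(p)$: indeed (\ref{h}) gives $\sum_i \partial^2 r(F)/\partial(x^i)^2 = H^{n+1}(p) + r(F(p)) \sum_{i,\alpha\le n} (\partial F^\alpha/\partial x^i)^2$. The second group is $2\sum_i (\partial F^{n+1}/\partial x^i)^2 = 2 \sum_i (\partial r(F)/\partial x^i)^2$.

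Next I would assemble these pieces and identify each term geometrically. The term $2 r(F) H^{n+1}(p)$ equals $2\,\overline g(H, \ArrowF)$ at $p$, since $\ArrowF = r(F)\,\partial/\partial r$ and $\overline g(\partial/\partial r, \partial/\partial r) = 1$ while $\partial/\partial r$ is $\overline g$-orthogonal to the $N$-directions; only the $\partial/\partial y^{n+1}$-component of $H$ contributes. The remaining terms are $2 r^2(F) \sum_{i,\alpha\le n} (\partial F^\alpha/\partial x^i)^2 + 2 \sum_i (\partial r(F)/\partial x^i)^2$, and I claim this equals $2m$. This is exactly the statement that $\sum_i \overline g(F_*(\partial/\partial x^i), F_*(\partial/\partial x^i)) = \operatorname{tr}_{g_t} g_t = m$: writing out $\overline g = dr^2 + r^2 g$ in the associated coordinates, $\overline g_{\alpha\beta} = r^2 g_{\alpha\beta}$ for $\alpha,\beta \le n$ and $\overline g_{n+1,n+1} = 1$, so $|F_*(\partial/\partial x^i)|^2_{\overline g} = r^2(F)\sum_{\alpha\le n}(\partial F^\alpha/\partial x^i)^2 + (\partial r(F)/\partial x^i)^2$ — the two leftover groups are precisely $2\sum_i |F_*(\partial/\partial x^i)|^2_{\overline g}$, and since $(x^i)$ are normal coordinates for $F^*(\overline g)$ at $p$ this sum is $m$. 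Combining, $\Delta(r^2(F))(p) = 2\,\overline g(H,\ArrowF)(p) + 2m$, and since $p$ was arbitrary the identity holds everywhere.

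The only mild subtlety — not really an obstacle — is bookkeeping: making sure the coordinates are simultaneously normal on $M$ at $p$ and associated-to-normal on $N$ at $\pi_N(F(p))$, so that both the Laplacian reduces to the flat sum of second derivatives and formula (\ref{h}) is applicable verbatim; this is exactly the setup already used to derive (\ref{h}) in Section 2, so it is legitimate. After that the proof is a direct matching of terms.
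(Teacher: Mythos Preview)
Your proposal is correct and follows essentially the same approach as the paper's proof: both fix $p$, use normal coordinates on $M$ together with coordinates on $C(N)$ associated with normal coordinates of $N$, expand $\Delta(r^2(F))$ via the product rule, invoke formula (\ref{h}) for $H^{n+1}(p)$, and identify the leftover terms as $m$ via $\sum_i |F_*(\partial/\partial x^i)|^2_{\overline g}=m$. The only cosmetic difference is the order of assembly---the paper computes $\overline g(H,\ArrowF)$ and $m$ first and then matches their sum with $\tfrac12\Delta(r^2(F))$, whereas you expand the Laplacian first and then identify each piece---but the ingredients and logic are identical.
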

\begin{proof}
Fix a point $p$ in $M$. We take normal local coordinates $(x^{i})_{i=1}^{m}$ of $(M,F^{*}(\overline{g}))$ centered at $p$ and 
local coordinates $(y^{\alpha})_{\alpha=1}^{n+1}$ of $(C(S),\overline{g})$ associated with normal local coordinates of $(N,g)$ centered at $\pi_N(F(p))$. 
Note that under these coordinates, $y^{n+1}=r$ and $F^{n+1}=r \circ F = r(F)$. 
First of all, by the local expression of $H^{n+1}(p)$ in (\ref{h}), we have the following equalities; 
\begin{align}
\overline{g}(H(p),\ArrowF(p))
=&H^{n+1}(p)r(F(p)) \notag\\
=&r(F(p))\sum_{i=1}^{m}\frac{\partial^2 r(F)}{{\partial x^{i}}^2}(p)-r(F(p))^2\sum_{i=1}^{m}\sum_{\alpha=1}^{n}\biggl(\frac{\partial F^{\alpha}}{\partial x^{i}}(p)\biggr)^2.\label{a}
\end{align}
Since $(F^{*}\overline{g})(\partial/\partial x^{i},\partial/\partial x^{i})=1$ at $p$, we have 
\begin{align}
m
=&\sum_{i=1}^{m}(F^{*}\overline{g})\biggl(\frac{\partial}{\partial x^{i}}(p),\frac{\partial}{\partial x^{i}}(p)\biggr)\notag\\
=&r(F(p))^2 \sum_{i=1}^{m}\sum_{\alpha=1}^{n}\biggl(\frac{\partial F^{\alpha}}{\partial x^{i}}(p)\biggr)^2+\sum_{i=1}^{m}\biggl(\frac{\partial r(F)}{{\partial x^{i}}}(p)\biggr)^2.\label{b}
\end{align}
Adding above two equations (\ref{a}) and (\ref{b}), we have

\begin{equation}\label{c}
\overline{g}(H(p),\ArrowF(p))+m
= r(F(p)) \sum_{i=1}^{m}\frac{\partial^2 r(F)}{{\partial x^{i}}^2}(p)+\sum_{i=1}^{m}\biggl(\frac{\partial r(F)}{{\partial x^{i}}}(p)\biggr)^2. 
\end{equation}

Since we took $(x^{i})_{i=1}^{m}$ as normal local coordinates of $(M,F^{*}(\overline{g}))$ centered at $p$, 
the Laplacian $\Delta$ is $\sum_{i=1}^{m}(\partial/\partial x^{i})^2$, 
and thus we have at $p$ 
\begin{align}
\Delta r^2(F)
=&\sum_{i=1}^{m}\frac{\partial^2 r^2(F)}{\partial {x^{i}}^2}\notag\\
=&2\Biggl(r(F)\sum_{i=1}^{m}\frac{\partial^2 r(F)}{{\partial x^{i}}^2}+\sum_{i=1}^{m}\biggl(\frac{\partial r(F)}{{\partial x^{i}}}\biggr)^2\Biggr). \label{d}
\end{align}
Thus from (\ref{c}) and (\ref{d}) we have shown that $\Delta r^2(F) = 2(\overline{g}(H,\ArrowF)+m)$. 
\end{proof}
\begin{lemma}\label{lem2}
Let $(C(N),\overline{g})$ be a Riemannian cone manifold over an $n$-dimensional Riemannian manifold $(N,g)$, 
$M$ an $m$-dimensional manifold and $F:M\times[0,T)\rightarrow C(N)$ be a mean curvature flow with initial condition $F_{0}:M\rightarrow C(N)$.
Then for any fixed time $t$ in $[0,T)$ the following equality holds; 
\begin{align}
2\overline{g}(H_{t},\ArrowF_{t})=\frac{\partial}{\partial t}{r^2(F_t)}.\label{bibunn}
\end{align}
\end{lemma}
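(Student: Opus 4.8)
The plan is to compute the time-derivative of $r^2(F_t)$ directly and recognize the result as $2\overline{g}(H_t,\ArrowF_t)$. Fix $p \in M$ and $t \in [0,T)$, and work in local coordinates $(y^\alpha)_{\alpha=1}^{n+1}$ of $C(N)$ associated with normal local coordinates of $N$ near $\pi_N(F(p,t))$, so that $y^{n+1} = r$ and hence $r(F_t) = F^{n+1}$ where $F^\alpha = y^\alpha \circ F$. Then $r^2(F_t) = (F^{n+1})^2$ as a function on $M \times [0,T)$, so
\begin{align}
\frac{\partial}{\partial t} r^2(F_t) = 2 F^{n+1} \frac{\partial F^{n+1}}{\partial t}. \notag
\end{align}
Since $F$ evolves by mean curvature flow, $\partial F / \partial t = H_t$, and reading off the $(n+1)$-st component gives $\partial F^{n+1}/\partial t = H_t^{n+1}$. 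Therefore $\frac{\partial}{\partial t} r^2(F_t) = 2 F^{n+1} H_t^{n+1} = 2\, r(F_t)\, H_t^{n+1}$.

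It remains to identify $r(F_t) H_t^{n+1}$ with $\overline{g}(H_t, \ArrowF_t)$. By definition $\ArrowF_t = r(F_t)\,\partial/\partial r = r(F_t)\,\partial/\partial y^{n+1}$, so
\begin{align}
\overline{g}(H_t, \ArrowF_t) = r(F_t)\,\overline{g}\!\left(H_t^\alpha \frac{\partial}{\partial y^\alpha},\ \frac{\partial}{\partial y^{n+1}}\right) = r(F_t) \sum_\alpha H_t^\alpha\, \overline{g}_{\alpha,n+1}. \notag
\end{align}
For the cone metric $\overline{g} = dr^2 + r^2 g$ one has $\overline{g}_{\alpha,n+1} = \delta_{\alpha,n+1}$ (the $\partial/\partial r$ direction is orthogonal to $TN$ and has unit length), so the sum collapses to $r(F_t) H_t^{n+1}$, exactly matching the expression obtained above. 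Combining the two computations yields $2\overline{g}(H_t, \ArrowF_t) = \frac{\partial}{\partial t} r^2(F_t)$ at $p$; since $p$ was arbitrary, this proves the identity. Note this is essentially an observation rather than a hard computation — the only point requiring care is being consistent about the coordinate conventions (using coordinates adapted to the cone structure so that $r = y^{n+1}$ and the metric has the stated form), and this was already set up in Section 2; no genuine obstacle arises.
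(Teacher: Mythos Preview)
Your proof is correct and follows essentially the same approach as the paper: both fix a point, pass to coordinates adapted to the cone so that $r = y^{n+1}$, use the mean curvature flow equation $\partial F/\partial t = H_t$ to read off the $(n+1)$-st component, and use the form of the cone metric to identify $\overline{g}(H_t,\ArrowF_t)$ with $r(F_t)H_t^{n+1}$. The only cosmetic difference is that the paper starts from $\overline{g}(H_t,\ArrowF_t)$ and rewrites it as $\tfrac12\,\partial_t r^2(F_t)$, while you compute $\partial_t r^2(F_t)$ first and then match it; the content is identical.
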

\begin{proof}
Fix a point $p$ in $M$. Take local coordinates $(y^{\alpha})_{\alpha=1}^{n+1}$ of $C(S)$ associated with normal local coordinates of $N$. 
Note that under these coordinates, $y^{n+1}=r$ and $F_{t}^{n+1}=r(F_{t})$.
Since $F$ satisfies the mean curvature flow condition (\ref{mcfc}), the following equalities hold;  
\begin{eqnarray*}
\overline{g}(H_{t}(p),\ArrowF_{t}(p))
&=&\overline{g}\biggl(\frac{\partial F}{\partial t}(p,t),\ArrowF_{t}(p)\biggr)\\
&=& r(F_{t}(p))\frac{\partial}{\partial t}r(F_{t }(p)) = \frac{1}{2}\frac{\partial}{\partial t}r^2(F_{t }(p)),
\end{eqnarray*}
from which (\ref{bibunn}) follows.
\end{proof}
\noindent
Now we are in a position to prove Theorem \ref{blowup}.
\begin{proof}[Proof of Theorem \ref{blowup}.]
Let $f:M\times[0,T)\rightarrow \mathbb{R}$ be a function defined by
\begin{align}
f(p,t)=r^2(F_{t}(p)) + 2 m t\notag.
\end{align}
For a fixed time $t$ in $[0,T)$, by Lemma \ref{lem1} and Lemma \ref{lem2}, 
\begin{eqnarray*}
\frac{\partial f}{\partial t}
&=&2\overline{g}(H_{t},\ArrowF_{t})+2 m \\
&=&\Delta_{t}r^2(F_{t}) =\Delta_{t}f(\cdot,t)
\end{eqnarray*}
where $\Delta_{t}$ is the Laplacian with respect to the metric ${F_{t}}^{*}(\overline{g})$ on $M$. 
Since $M$ is compact, there is a maximum of $f(\cdot,0)(=r^2(F_{0}))$ on $M$, which 
we denote by $C_{0}$. 
By applying the maximum principle to the function $f$, it follows that $f(p,t)=r^2(F_{t }(p))+2 m t \leq C_{0}$ on $M\times [0,T)$. 
Therefore we obtain the following inequalities;
\begin{align}
t\leq\frac{C_{0}-r^2(F_{t}(p))}{2m}\leq\frac{C_{0}}{2m}\notag
\end{align}
for all $t$ in $[0,T)$. This means that the maximal time $T$ is finite. 
\end{proof}
\section{Monotonicity formula}

Next we turn to the monotonicity formula. 
For a fixed time $T$ in $\mathbb{R}$, we define the backward heat kernel 
$\rho_{T}:\mathbb{R}\times(-\infty,T)\rightarrow\mathbb{R}$ as follows; 
\begin{align}
\rho_{T}(y,t)=\frac{1}{(4\pi(T-t))^{m/2}}\exp\biggl(-\frac{y^2}{4(T-t)}\biggr).\notag
\end{align}
To simplify the notations, we use following abbreviation;
\begin{align}
&\int_{M_{t}}\rho_{T}:=\int_{M}\rho_{T}(r(F_{t}(p)),t)dv_{g_{t}}\notag\\
&\int_{M_{t}}\rho_{T}\biggl\vert\frac{\ArrowF^{\bot}}{2(T-t)}+H\biggr\vert_{\overline{g}}^2\notag
:=\int_{M}\rho_{T}(r(F_{t}(p)),t)\biggl\vert\frac{\ArrowF_{t}^{\bot}(p)}{2(T-t)}+H_{t}(p)\biggr\vert_{\overline{g}}^2dv_{g_{t}}.\notag
\end{align}
Then Huisken's monotonicity formula for a cone is the following.
\begin{theorem}[Monotonicity formula]\label{mnf}
Let $M$ be a compact $m$-dimensional manifold without boundary, 
$(C(N),\overline{g})$ the Riemannian cone manifold over an $n$-dimensional Riemannian manifold $(N,g)$ and 
$F:M\times [0,T)\rightarrow C(N)$ the mean curvature flow with initial condition 
$F_{0}:M\rightarrow C(N)$. 
Then the following equation holds;
\begin{align}
\frac{d}{dt}\int_{M_{t}}\rho_{T}=-\int_{M_{t}}\rho_{T}\biggl\vert\frac{\ArrowF^{\bot}}{2(T-t)}+H\biggr\vert_{\overline{g}}^2. \label{mono1}
\end{align}
\end{theorem}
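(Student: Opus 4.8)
The plan is to follow Huisken's original argument for the monotonicity formula, the only new ingredient being the use of the position vector $\ArrowF$ in the cone in place of the Euclidean position vector, together with the identities of Lemmas \ref{lem1} and \ref{lem2}. First I would compute the $t$-derivative of the integrand $\rho_T(r(F_t),t)\,dv_{g_t}$ pointwise. The volume form contributes the factor $-|H_t|_{\overline g}^2\,dv_{g_t}$ by formula (\ref{kihonn}). The derivative of $\rho_T$ splits into the explicit partial derivative $\partial\rho_T/\partial t$ (which, as a function of $(y,t)$, satisfies the backward heat equation $\partial_t\rho_T = -\partial_y^2\rho_T + (\text{first-order term})$ in the one-dimensional variable $y$) and the term coming from the time-dependence of $y = r(F_t(p))$, namely $(\partial\rho_T/\partial y)\cdot \partial_t r(F_t)$. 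Here I would use Lemma \ref{lem2} to rewrite $\partial_t r^2(F_t) = 2\overline g(H_t,\ArrowF_t)$, hence express $\partial_t r(F_t)$ in terms of $\overline g(H_t,\ArrowF_t)/r(F_t)$.

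The second step is to handle the spatial Laplacian terms. In Huisken's Euclidean proof one integrates $\Delta_t \rho_T$ over $M$ and uses that $M$ is closed so that $\int_M \Delta_t(\cdot)\,dv_{g_t}=0$; the term $\Delta_t\rho_T$ is then traded, via the chain rule, against $|\nabla^t (r(F_t))|^2$ and $\Delta_t(r(F_t))$, and the latter is controlled by Lemma \ref{lem1}, which gives $\Delta_t(r^2(F_t)) = 2(\overline g(H_t,\ArrowF_t)+m)$. So I would add and subtract $\int_{M_t}\Delta_t\rho_T$ (which vanishes), express $\Delta_t\rho_T$ through $\rho_T$, its $y$-derivatives, $\nabla^t r(F_t)$ and $\Delta_t r(F_t)$, and substitute Lemma \ref{lem1}. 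The key algebraic point, exactly as in Huisken, is that the decomposition $\ArrowF = \ArrowF^{\top}+\ArrowF^{\bot}$ together with $|\nabla^t(r(F_t))|^2 = \tfrac{1}{r^2}|\,{}^{\top}\!\text{-part of }\ArrowF\,|^2$-type identities lets one complete the square: all the terms assemble into $-\rho_T\bigl|\tfrac{\ArrowF^{\bot}}{2(T-t)} + H\bigr|_{\overline g}^2$. One should note that $|\ArrowF|_{\overline g}^2 = r^2(F)$ and $\overline g(\ArrowF,H)=\overline g(\ArrowF^{\bot},H)$ since $H$ is normal, which is what makes the cross terms match.

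Concretely, I expect the bookkeeping to reduce to the scalar identity
\begin{align}
\frac{\partial\rho_T}{\partial t} + \frac{\partial\rho_T}{\partial y}\cdot\frac{\overline g(H,\ArrowF)}{r(F)} + \Delta_t\rho_T - \rho_T\,|H|_{\overline g}^2 = -\rho_T\biggl|\frac{\ArrowF^{\bot}}{2(T-t)} + H\biggr|_{\overline g}^2 + \Delta_t\rho_T,
\end{align}
after which integrating over the closed manifold $M$ kills the final $\Delta_t\rho_T$ and yields (\ref{mono1}). The main obstacle, and the place where care is genuinely needed, is the normal/tangential split: one must verify that $r^2|\nabla^t(r(F_t))|^2 = |\ArrowF|_{\overline g}^2 - |\ArrowF^{\bot}|_{\overline g}^2 = |\ArrowF^{\top}|_{\overline g}^2$, i.e. that the gradient of the radial function pulled back to $M$ is precisely the tangential part of the position vector (up to the factor $r$ from $\overline g = dr^2 + r^2 g$). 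Granting that, the completion of the square is identical to the Euclidean case, since the factor $r^2(F)$ plays the role that $|F|^2$ plays in $\mathbb R^{m+k}$, and Lemmas \ref{lem1} and \ref{lem2} are exactly the cone analogues of the two computations $\Delta|F|^2 = 2(\overline g(H,F)+m)$ and $\partial_t|F|^2 = 2\overline g(H,F)$ used by Huisken.
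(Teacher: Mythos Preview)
Your proposal is correct and follows essentially the same route as the paper: compute $\partial_t(\rho_T\,dv_{g_t})$ using (\ref{kihonn}) and Lemma \ref{lem2}, compute $\Delta_t\rho_T$ by the chain rule and combine it with Lemma \ref{lem1} (the paper uses its intermediate identity (\ref{c}) here), verify the identity $|\ArrowF^{\top}|_{\overline g}^2 = r^2(F)\,|\nabla^t r(F)|^2$ (this is exactly the paper's equation (\ref{normoftop})), integrate away $\Delta_t\rho_T$ over the closed manifold, and complete the square. The only cosmetic difference is that the paper carries out all of these computations explicitly in normal coordinates rather than stating them as chain-rule identities.
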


\begin{proof}
First we calculate the left term of (\ref{mono1}) using (\ref{kihonn}). 
\begin{align}
\frac{d}{dt}& \int_{M}\rho_{T}(r(F_{t}(p)),t)dv_{g_{t}}\notag\\
=&\frac{d}{dt} \int_{M}\frac{1}{(4\pi(T-t))^{m/2}}\exp\biggl(-\frac{r^2(F_{t}(p))}{4(T-t)}\biggr)\sqrt{\det(g_{t,ij})}\,dx^{1}\wedge\dots\wedge dx^{m}\notag\\
=&\int_{M}\rho_{T}(r(F_{t}(p)),t)\biggl(\frac{m}{2(T-t)}-\frac{r^2(F_{t}(p))}{4(T-t)^2}\notag\\
&\hspace{40mm}-\frac{r(F_{t}(p))\bigl(\frac{\partial}{\partial t}r(F_{t}(p))\bigr)}{2(T-t_{0})}-\vert H_{t}(p) \vert_{\overline{g}}^2\biggr)dv_{g_{t}}.\label{f}
\end{align}
It is clear that
\begin{eqnarray}
\vert\ArrowF_{t}(p)\vert_{\overline{g}}^{2}
&=&\overline{g}\biggl(r(F_{t}(p))\frac{\partial}{\partial r},r(F_{t}(p))\frac{\partial}{\partial r}\biggr)\nonumber\\
&=&r^2(F_{t}(p)). \label{normofpoint}
\end{eqnarray}
Substituting (\ref{bibunn}) and (\ref{normofpoint}) in (\ref{f}), we have following formula;
\begin{align}
&\frac{d}{dt}\int_{M}\rho_{T}(r(F_{t}(p)),t)dv_{g_{t}}\notag\\
=&\int_{M}\rho_{T}(r(F_{t}(p)),t)\Biggl(\frac{m}{2(T-t)}-\frac{\vert\ArrowF_{t}(p)\vert_{\overline{g}}^{2}}{4(T-t)^2}\notag\\
&\hspace{50mm}-\frac{\overline{g}(H_{t}(p),\ArrowF_{t}(p))}{2(T-t)}-\vert H_{t}(p)\vert_{\overline{g}}^2\biggr)dv_{g_{t}}.\label{g}
\end{align}

Let $t$ and $p$ be fixed. We take normal local coordinates $(x^{i})_{i=1}^{m}$ centered at $p$ with respect to the
Riemannian metric $g_{t}(=F_{t}^*(\overline{g}))$ and local coordinates $(y^{\alpha})_{\alpha=1}^{n+1}$ around $F_{t}(p)$ associated with normal local coordinates of $(N,g)$. 
Under these coordinates, the Laplacian $\Delta_{t}$ with respect to $g_{t}$ is $\partial^2/\partial {x^{1}}^{2}+\dots+\partial^2/\partial {x^{m}}^{2}$ at $p$. 
Under these coordinates we have following equations at the fixed $t$ and $p$;
\begin{eqnarray}
\Delta_{t}\rho_{T}(r(F_{t}),t)\
&=& \sum_{i=1}^{m}\frac{\partial^2}{\partial {x^{i}}^2}\biggl\vert_{x=p} \rho_{T}(r(F_{t}),t)\notag\\
&=&\sum_{i=1}^{m}\frac{\partial}{\partial x^{i}}\biggl\vert_{x=p} \biggl(\frac{\partial}{\partial x^{i}}\rho_{T}(r(F_{t}),t)\biggr)\notag\\
&=&\sum_{i=1}^{m}\frac{\partial}{\partial x^{i}}\biggl\vert_{x=p} \Biggl(\rho_{T}(r(F_{t}),t)\biggl(-\frac{r(F_{t})\bigl(\frac{\partial}{\partial x^{i}}r(F_{t})\bigr)}{2(T-t)}\biggr)\Biggr)\notag\\
&=&\rho_{T}(r(F_{t}),t)\Biggl(\frac{r^2(F_{t}) \bigl(\frac{\partial}{\partial x^{i}}r(F_{t})\bigr)^2}{4(T-t)^2}\notag\\
&&\hspace{10mm}-\frac{\bigl(\frac{\partial}{\partial x^{i}}r(F_{t})\bigr)^2}{2(T-t)}-\frac{r(F_{t})\bigl(\frac{\partial^2}{\partial {x^{i}}^2}r(F_{t})\bigr)}{2(T-t)}\Biggr). \label{Laplacian}
\end{eqnarray}
Furthermore we want to express $\ArrowF_{t}^{\top}(p)$ under these coordinates. Now by our choice of the local coordinates of $(x^{i})_{i=1}^{m}$, 
it is clear that 
\begin{align}
\overline{g}\Biggl(F_{t*}(p)\biggl(\frac{\partial}{\partial x^{i}}\biggr),F_{t*}(p)\biggl(\frac{\partial}{\partial x^{j}}\biggr)\Biggr)=\delta_{ij}.\label{delta}
\end{align}
Note that $y^{n+1}=r$ and $F_{t}^{n+1}=r(F_{t})$. The following equalities hold;
\begin{eqnarray}
\ArrowF_{t}^{\top}(p)
&=&\sum_{i=1}^{m}\overline{g}\Biggl(\ArrowF_{t}^{\top}(p),F_{t*}(p)\biggl(\frac{\partial}{\partial x^{i}}\biggr)\Biggr)F_{t*}(p)
\biggl(\frac{\partial}{\partial x^{i}}\biggr)\notag\\
&=&\sum_{i=1}^{m}\overline{g}\Biggl(\ArrowF_{t}(p),F_{t*}(p)\biggl(\frac{\partial}{\partial x^{i}}\biggr)\Biggr)F_{t*}(p)
\biggl(\frac{\partial}{\partial x^{i}}\biggr)\notag\\
&=&\sum_{i=1}^{m}\overline{g}\Biggl(r(F_{t}(p))\frac{\partial}{\partial r},\sum_{\alpha=1}^{n+1}\frac{\partial F_{t}^{\alpha}(p)}{\partial x^{i}}
\frac{\partial}{\partial y^{\alpha}}\Biggr)F_{t*}(p)\biggl(\frac{\partial}{\partial x^{i}}\biggr)\notag\\
&=&r(F_{t}(p))\sum_{i=1}^{m}\frac{\partial r(F_{t}(p))}{\partial x^{i}}F_{t*}(p)\biggl(\frac{\partial}{\partial x^{i}}\biggr).\label{top}
\end{eqnarray}
Using (\ref{delta}) and (\ref{top}), we can express the norm of $\ArrowF_{t}^{\top}(p)$ as follows;
\begin{eqnarray}
\vert\ArrowF_{t}^{\top}(p)\vert_{\overline{g}}^{2}
&=&\overline{g}\biggl(\ArrowF_{t}^{\top}(p),\ArrowF_{t}^{\top}(p)\biggr)\notag\\
&=&r^2(F_{t}(p))\sum_{i=1}^{m}\biggl(\frac{\partial r(F_{t})}{\partial x^{i}}(p)\biggr)^2 .\label{normoftop}
\end{eqnarray}
Applying (\ref{c}) for $F_{t}$ and using (\ref{Laplacian}) and (\ref{normoftop}), we have the following equality;
\begin{align}
&\Delta_{t}\rho_{T}(r(F_{t}(p)),t)\notag\\
=&\rho_{T}(r(F_{t}(p)),t)\Biggl(\frac{\vert\ArrowF_{t}^{\top}(p)\vert_{\overline{g}}^{2}}{4(T-t)^2}-\frac{m}{2(T-t)}-\frac{\overline{g}(H_{t}(p),\ArrowF_{t}(p))}{2(T-t)}\biggr).\label{important}
\end{align}
In this equation (\ref{important}) there are no local coordinates $x^{i}$, so we have proven this equation (\ref{important}) for all $p$ in $M$ globally. 
The equation (\ref{important}) is equivalent to 
\begin{eqnarray}
\rho_{T}(F_{tR}(p),t)\frac{m}{2(T-t)}
&=&-\Delta_{t}\rho_{T}(F_{tR}(p),t)\notag\\
&+& \rho_{T}(F_{tR}(p),t)\Biggl(\frac{\vert\overline{F_{t}}^{\top}(p)\vert_{\overline{g}}^{2}}{4(T-t)^2}-\frac{\overline{g}(H_{t}(p),\overline{F_{t}}(p))}{2(T-t)}\biggr).\label{i}
\end{eqnarray}
Substituting (\ref{i}) in (\ref{g}), we have following equalities;
\begin{align}
&\frac{d}{dt}\int_{M}\rho_{T}(r(F_{t}(p)),t)dv_{g_{t}}\notag\\
=&-\int_{M}\Delta_{t}\rho_{T}(r(F_{t}(p)),t)dv_{g_{t}}\notag\\
&+\int_{M}\rho_{T}(r(F_{t}(p),t))\Biggl(\frac{\vert\ArrowF_{t}^{\top}(p)\vert_{\overline{g}}^{2}}{4(T-t)^2}-\frac{\vert\ArrowF_{t}(p)\vert_{\overline{g}}^{2}}{4(T-t)^2}\notag\\
&\hspace{40mm}-2\times\frac{\overline{g}(H_{t}(p),\ArrowF_{t}(p))}{2(T-t)}-\vert H_{t_{0}}(p)\vert_{\overline{g}}^2\biggr)dv_{g_{t}}\notag\\
=&\int_{M}\rho_{T}(r(F_{t}(p)),t)\Biggl(-\frac{\vert\ArrowF_{t}^{\bot}(p)\vert_{\overline{g}}^{2}}{4(T-t)^2}\notag\\
&\hspace{40mm}-2\times\frac{\overline{g}(H_{t}(p),\ArrowF_{t}^{\bot}(p))}{2(T-t)}-\vert H_{t}(p)\vert_{\overline{g}}^2\biggr)dv_{g_{t}}\notag\\
=&-\int_{M}\rho_{T}(r(F_{t}(p)),t)\Biggl\vert\frac{\ArrowF_{t}^{\bot}(p)}{2(T-t)}+H_{t}(p)\Biggr\vert_{\overline{g}}^2dv_{g_{t}}.
\end{align}
This completes the proof of Theorem \ref{mnf}. 
\end{proof}
\section{Singularities and the parabolic rescaling}

In this section we see that the property that a mean curvature flow develops type I singularities is preserved under parabolic rescaling.

\begin{proposition}\label{aa}
Let $M$ be an m-dimensional manifold and $(C(N),\overline{g})$ the Riemannian cone manifold over an n-dimensional Riemannian manifold $(N,g)$. 
If a map $F:M\times[0,T)\rightarrow C(N)$ is a mean curvature flow, then the parabolic rescaling of $F$ of scale $\lambda$ is 
also the mean curvature flow. 
\end{proposition}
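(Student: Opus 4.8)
The plan is to recognize the parabolic rescaling as the composition of an ambient homothety of the cone with a reparametrization of time, and then to track how the mean curvature flow equation transforms under each. First I would introduce the radial dilation $\mu_\lambda\colon C(N)\to C(N)$, $\mu_\lambda(x,r)=(x,\lambda r)$, and check that it is a homothety of the cone metric: with $\tilde r:=\lambda r$ one has $\mu_\lambda^{\ast}\overline{g}=d\tilde r^{\,2}+\tilde r^{\,2}g=\lambda^2(dr^2+r^2g)=\lambda^2\,\overline{g}$. With this notation the parabolic rescaling is exactly $F^{\lambda}(p,s)=\mu_\lambda\bigl(F(p,\,T+s/\lambda^2)\bigr)$ for $s\in[-\lambda^2T,0)$; in particular $F^{\lambda}_{s}=\mu_\lambda\circ F_{t}$ with $t=T+s/\lambda^2$.

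Next I would record the transformation law for the mean curvature vector under $\mu_\lambda$. Because the Levi-Civita connection of $\lambda^2\overline{g}$ is literally that of $\overline{g}$, the map $\mu_\lambda$ is affine, and since $d\mu_\lambda$ is conformal it preserves the orthogonal splitting $F^{\ast}(TC(N))\cong TM\oplus T^{\bot}M$; hence for any immersion $\Psi\colon M\to C(N)$ one gets $\mathrm{II}_{\mu_\lambda\circ\Psi}(X,Y)=d\mu_\lambda\bigl(\mathrm{II}_{\Psi}(X,Y)\bigr)$. Taking the trace with respect to the induced metric, which is now $(\mu_\lambda\circ\Psi)^{\ast}\overline{g}=\lambda^2\,\Psi^{\ast}\overline{g}$ and therefore contributes a factor $\lambda^{-2}$, yields
\[
H_{\mu_\lambda\circ\Psi}\;=\;\lambda^{-2}\,d\mu_\lambda\bigl(H_{\Psi}\bigr).
\]
Equivalently, this can be read off from the Gau\ss\ formula (\ref{mcv}): the Christoffel symbols $\Gamma^{k}_{ij}$ and $\Gamma^{\alpha}_{\beta\gamma}$ are invariant under a constant rescaling of the ambient metric, while the inverse induced metric $g^{ij}$ scales by $\lambda^{-2}$.

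Finally I would combine the two pieces. By the chain rule,
\[
\frac{\partial F^{\lambda}}{\partial s}(p,s)\;=\;d\mu_\lambda\Bigl(\tfrac{1}{\lambda^2}\,\tfrac{\partial F}{\partial t}(p,\,T+s/\lambda^2)\Bigr)\;=\;\tfrac{1}{\lambda^2}\,d\mu_\lambda\bigl(H_{t}(p)\bigr),
\]
using that $F$ solves (\ref{mcfc}), where $t=T+s/\lambda^2$. On the other hand, applying the transformation law above to $\Psi=F_{t}$ gives that the mean curvature vector of $F^{\lambda}_{s}=\mu_\lambda\circ F_{t}$ is precisely $\lambda^{-2}\,d\mu_\lambda(H_{t})$. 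The two expressions agree, so $\partial F^{\lambda}/\partial s=H^{F^{\lambda}}_{s}$ on $M\times[-\lambda^2T,0)$, which is the assertion.

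The proposition is essentially bookkeeping, so there is no real obstacle; the one point to watch is that two independent factors of $\lambda^{-2}$ appear — one from tracing the second fundamental form against the dilated induced metric, and one from the substitution $t=T+s/\lambda^2$ — and the equation closes precisely because these agree, which is exactly why the time variable is scaled by $\lambda^2$ rather than by $\lambda$. One should also note that $\mu_\lambda$ is a genuine diffeomorphism of $C(N)$, the apex not being a point of $C(N)$, so nothing delicate occurs there. If one prefers to stay closer to the computational style of the earlier sections, the same conclusion can instead be obtained by writing both $F$ and $F^{\lambda}$ in local coordinates associated with normal local coordinates of $N$ and comparing the components of (\ref{mcv}) directly, using (\ref{h}) for the radial component.
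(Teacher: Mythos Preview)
Your argument is correct. The approach, however, differs from the paper's: you phrase the parabolic rescaling globally as $F^{\lambda}_s=\mu_\lambda\circ F_{t}$ for the radial dilation $\mu_\lambda$, observe that $\mu_\lambda$ is a homothety of $(C(N),\overline g)$ and hence affine, and deduce the universal transformation law $H_{\mu_\lambda\circ\Psi}=\lambda^{-2}\,d\mu_\lambda(H_\Psi)$ for the mean curvature vector; the proposition then follows by matching this against the $\lambda^{-2}$ coming from the chain rule in $s$. The paper instead works entirely in local coordinates associated with normal coordinates of $N$: it verifies (\ref{glambda}) directly, tracks how the Christoffel symbols $\overline\Gamma^{\alpha}_{\beta\gamma}$ at $F^{\lambda}_{s_0}(p_0)$ compare to those at $F_{t_0}(p_0)$ component by component, and then inserts everything into the Gau\ss\ formula (\ref{mcv}) to obtain the componentwise relations (\ref{aaa1}) and (\ref{aaa2}) before comparing with $\partial F^{\lambda}/\partial s$. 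Your route is cleaner and makes transparent why the exponent in the time substitution must be $2$; the paper's coordinate computation, on the other hand, yields the explicit identities $H^{\lambda\,\alpha}_{s}=\lambda^{-2}H^{\alpha}_{t}$ for $\alpha\le n$, $H^{\lambda\,n+1}_{s}=\lambda^{-1}H^{n+1}_{t}$, and (\ref{glambda}), which are reused verbatim in the proofs of Propositions~\ref{prop5} and~\ref{prop6}. (Your formula $H_{\mu_\lambda\circ\Psi}=\lambda^{-2}\,d\mu_\lambda(H_\Psi)$ of course encodes exactly the same information once one writes out $d\mu_\lambda$ in these coordinates.)
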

\begin{proof}
Fix $(p_{0},s_{0})$ in $M\times[-\lambda^2T,0)$. 
Let $t=T+s/\lambda^2$ and $t_{0}=T+s_{0}/\lambda^2$. 
Let $(x^{i})_{i=1}^{m}$ be local coordinates of $M$ around $p_{0}$. 
Let $(y^{\alpha})_{\alpha=1}^{n+1}$ be local coordinates of $C(N)$ around $F^{\lambda}(p_{0},s_{0})$ associated with local coordinates $N$. 
Put 
$$
{g^{\lambda}_{s_{0}}}_{ij}=(F^{\lambda *}_{s_{0}}\overline{g})\biggl(\frac{\partial}{\partial x^{i}},\frac{\partial}{\partial x^{j}}\biggr)\ \ 
\mathrm{and}\ \ {g_{t_{0}}}_{ij}=(F_{t_{0}}^{*}\overline{g})\biggl(\frac{\partial}{\partial x^{i}},\frac{\partial}{\partial x^{j}}\biggr).
$$
Then one can easily show that 
\begin{equation}\label{glambda}
{g^{\lambda}_{s_{0}}}_{ij}=\lambda^2{g_{t_{0}}}_{ij}.
\end{equation}
Thus the Christoffel symbols  ${\Gamma^{\lambda}_{s_{0}}}^{i}_{jk}$ with respect to $g^{\lambda}_{s_{0}}$ and 
${\Gamma_{t_{0}}}^{i}_{jk}$ with respect to $g_{t_{0}}$ are related by 
$${\Gamma^{\lambda}_{s_{0}}}^{i}_{jk}={\Gamma_{t_{0}}}^{i}_{jk}.$$ 
One can also compute the Christoffel symbols of the Riemannian cone manifold $C(N)$ as follows. If $1\leq\alpha,\beta,\gamma\leq n$, then 
$\overline{\Gamma}^{\alpha}_{\beta\gamma}(F^{\lambda}_{s_{0}}(p_{0}))=\overline{\Gamma}^{\alpha}_{\beta\gamma}(F_{t_{0}}(p_{0})).$
If $1\leq\beta,\gamma\leq n$ and $\alpha=n+1$ then 
$\overline{\Gamma}^{n+1}_{\beta\gamma}(F^{\lambda}_{s_{0}}(p_{0}))=\lambda\overline{\Gamma}^{n+1}_{\beta\gamma}(F_{t_{0}}(p_{0})),$
and 
if $1\leq\alpha,\gamma\leq n$ and $\beta=n+1$ then 
$\overline{\Gamma}^{\alpha}_{n+1 \gamma}(F^{\lambda}_{s_{0}}(p_{0}))=\frac{1}{\lambda}\overline{\Gamma}^{\alpha}_{n+1 \gamma}(F_{t_{0}}(p_{0}))
$.
By using these and the formula (\ref{mcv}), one can show that 
the mean curvature vectors $H_{t_{0}}$ of $F_{t_{0}}$ and 
$H^{\lambda}_{s_{0}}$ of $F^{\lambda}_{s_{0}}$ are related by 
\begin{equation}\label{aaa1}
H^{\lambda\,\alpha}_{s_{0}}(p_{0})
=\frac{1}{\lambda^2}H_{t_{0}}^{\alpha}(p_{0}).
\end{equation}
for $1 \le \alpha \le n$ and 
\begin{equation}
H^{\lambda\,n+1}_{s_{0}}(p_{0})
=\frac{1}{\lambda}H_{t_{0}}^{n+1}(p_{0}).\label{aaa2}
\end{equation}
Now suppose that $F$ is a mean curvature flow, so $F$ satisfies 
\begin{align}
F_{*}(p_{0},t_{0})\biggl(\frac{\partial}{\partial t}\biggr)=H_{t_{0}}(p_{0}).\notag
\end{align}
Then 
\begin{eqnarray}
{F^{\lambda}}_{*}(p_{0},s_{0})\biggl(\frac{\partial}{\partial s}\biggr)
&=&\frac{1}{\lambda^2}\sum_{\alpha=1}^{n}H_{t_{0}}^{\alpha}(p_{0})\frac{\partial}{\partial y^{\alpha}}(p_{0})
+\frac{1}{\lambda}H_{t_{0}}^{n+1}(p_{0})\frac{\partial}{\partial y^{n+1}}(p_{0})\notag\\
&=&\sum_{\alpha=1}^{n}H^{\lambda\,\alpha}_{s_{0}}(p_{0})\frac{\partial}{\partial y^{\alpha}}(p_{0})
+H^{\lambda\,n+1}_{s_{0}}(p_{0})\frac{\partial}{\partial y^{n+1}}(p_{0})
=H^{\lambda}_{s_{0}}(p_{0}).\notag
\end{eqnarray}
This means that $F^{\lambda}$ is the mean curvature flow. This completes the proof of Proposition \ref{aa}.
\end{proof}
\begin{proposition}\label{prop5}
Let $M$ be an $m$-dimensional manifold and $C(N)$ the Riemannian cone over an $n$-dimensional Riemannian manifold $(N,g)$. 
Let $F : M \times[0,T)\rightarrow C(N)$ be a mean curvature flow. 
Then parabolic rescaling preserves the value of $\int_{M_{t}}\rho_{T}$. 
This means that for all $t$ in $(0,T)$ the following equation holds. 
\begin{align}
\int_{M_{t}}\rho_{T}=\int_{M^{\lambda}_{s}}\rho_{0}\notag
\end{align}
where $s=\lambda^2(t-T)$. Here 
we have used abbreviation for $\int_{M_{t}}\rho_{T}$ and $\int_{M^{\lambda}_{s}}\rho_{0}$ by 
\begin{align}
&\int_{M_{t}}\rho_{T}=\int_{M}\rho_{T}(r(F_{t}(p)),t)dv_{g_{t}}\notag\\
&\int_{M^{\lambda}_{s}}\rho_{0}=\int_{M}\rho_{0}(r(F^{\lambda}_{s}(p)),s)dv_{g^{\lambda}_{t}}.\notag
\end{align}
\end{proposition}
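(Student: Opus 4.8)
The plan is to verify the identity by a direct change-of-variables computation, exploiting the explicit scaling relations between the rescaled flow $F^\lambda$ and the original flow $F$ that are already recorded in the excerpt. First I would fix $t \in (0,T)$ and set $s = \lambda^2(t-T)$, so that $T - t = -\,s/\lambda^2 = (0 - s)/\lambda^2$. The three ingredients I need are: (i) the relation between the radial coordinates, $r(F^\lambda_s(p)) = \lambda\, r(F_t(p))$, which is immediate from the definition of the parabolic rescaling; (ii) the relation between the induced volume forms, which follows from $g^\lambda_{s,ij} = \lambda^2 g_{t,ij}$ (equation (\ref{glambda})), giving $\sqrt{\det(g^\lambda_{s,ij})} = \lambda^m \sqrt{\det(g_{t,ij})}$, hence $dv_{g^\lambda_s} = \lambda^m\, dv_{g_t}$; and (iii) the comparison of the two backward heat kernels $\rho_T(\cdot,t)$ and $\rho_0(\cdot,s)$.

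The heart of the matter is then just to expand $\rho_0(r(F^\lambda_s(p)),s)$ using the definition of $\rho_0$ and substitute the scaling relations. One computes
\begin{align}
\rho_0(r(F^\lambda_s(p)),s)
&= \frac{1}{(4\pi(0-s))^{m/2}}\exp\!\Bigl(-\frac{r^2(F^\lambda_s(p))}{4(0-s)}\Bigr)\notag\\
&= \frac{1}{(4\pi\lambda^2(T-t))^{m/2}}\exp\!\Bigl(-\frac{\lambda^2 r^2(F_t(p))}{4\lambda^2(T-t)}\Bigr)\notag\\
&= \frac{1}{\lambda^m}\,\rho_T(r(F_t(p)),t).\notag
\end{align}
Multiplying by the volume form and using $dv_{g^\lambda_s} = \lambda^m dv_{g_t}$, the two factors of $\lambda^m$ cancel, so the integrand of $\int_{M^\lambda_s}\rho_0$ equals pointwise (after identifying $M^\lambda$ with $M$ as manifolds, which the rescaling does tautologically) the integrand of $\int_{M_t}\rho_T$. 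Integrating over $M$ then gives $\int_{M_t}\rho_T = \int_{M^\lambda_s}\rho_0$, as claimed.

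I do not anticipate a genuine obstacle here; the proposition is a bookkeeping lemma and the only points that require a moment's care are getting the time substitution $s = \lambda^2(t-T)$ consistent so that $0 - s = \lambda^2(T-t)$ (one must keep track of signs, since both $s < 0$ and $t - T < 0$), and confirming that the map $p \mapsto p$ genuinely identifies the domains of $F_t$ and $F^\lambda_s$ so that "integrating over $M$" means the same thing on both sides. Both of these are already implicit in the definitions given in the introduction and in Proposition \ref{aa}. I would therefore present the proof as the short three-line computation above, preceded by the statement of the three scaling relations (i)--(iii), with (ii) cited from (\ref{glambda}).

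\begin{proof}
Fix $t$ in $(0,T)$ and set $s = \lambda^2(t-T)$, so that $0 - s = \lambda^2(T-t) > 0$. From the definition of the parabolic rescaling we have $r(F^\lambda_s(p)) = \lambda\, r(F_t(p))$ for every $p \in M$, and from (\ref{glambda}) we have $g^\lambda_{s,ij} = \lambda^2 g_{t,ij}$, hence $\sqrt{\det(g^\lambda_{s,ij})} = \lambda^m \sqrt{\det(g_{t,ij})}$ and $dv_{g^\lambda_s} = \lambda^m\, dv_{g_t}$. Substituting these into the definition of $\rho_0$ gives, at each $p \in M$,
\begin{align}
\rho_0(r(F^\lambda_s(p)),s)
&= \frac{1}{(4\pi(0-s))^{m/2}}\exp\!\Bigl(-\frac{r^2(F^\lambda_s(p))}{4(0-s)}\Bigr)\notag\\
&= \frac{1}{(4\pi\lambda^2(T-t))^{m/2}}\exp\!\Bigl(-\frac{\lambda^2 r^2(F_t(p))}{4\lambda^2(T-t)}\Bigr)
= \frac{1}{\lambda^m}\,\rho_T(r(F_t(p)),t).\notag
\end{align}
Therefore
\begin{align}
\int_{M^\lambda_s}\rho_0
= \int_M \rho_0(r(F^\lambda_s(p)),s)\, dv_{g^\lambda_s}
= \int_M \frac{1}{\lambda^m}\,\rho_T(r(F_t(p)),t)\,\lambda^m\, dv_{g_t}
= \int_{M_t}\rho_T,\notag
\end{align}
which is the desired equality.
\end{proof}
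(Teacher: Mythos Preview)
Your proof is correct and follows essentially the same approach as the paper: both use the volume-form scaling $dv_{g^\lambda_s} = \lambda^m\,dv_{g_t}$ from (\ref{glambda}), the radial relation $r(F^\lambda_s) = \lambda\,r(F_t)$, and the time substitution $0-s = \lambda^2(T-t)$ to reduce the integrand on the rescaled side to that on the original side. The only cosmetic difference is that you isolate the pointwise identity $\rho_0(r(F^\lambda_s),s) = \lambda^{-m}\rho_T(r(F_t),t)$ before integrating, whereas the paper carries the full integrand through the computation in one chain of equalities.
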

\begin{proof}
From the equation (\ref{glambda}) in the proof of the proposition \ref{aa}, we get
$$ 
\sqrt{\det({g^{\lambda}_{s}}_{ij})}=\lambda^m \sqrt{\det({g_{t}}_{ij})}\ \ \ \mathrm{and}\ \ \ 
dv_{g^{\lambda}_{s}}=\lambda^m dv_{g_{t}}.
$$
It follows that 
\begin{eqnarray}
\int_{M^{\lambda}_{s}}\rho_{0}
&=& \int_{M}\frac{1}{(4\pi(-s))^{m/2}}\exp\biggl(-\frac{r^2(F^{\lambda}_{s}(p))}{4(0-s)}\biggr)dv_{g^{\lambda}_{s}}\notag\\
&=&\int_{M}\frac{1}{(4\pi(\lambda^2(T-t)))^{m/2}}\exp\biggl(-\frac{\lambda^2 r^2(F_{t}(p))}{4\lambda^2(T-t)}\biggr)\lambda^m dv_{g_{t}}\notag\\
&=&\int_{M}\frac{1}{(4\pi(T-t))^{m/2}}\exp\biggl(-\frac{F_{tR}(p)^2}{4(T-t)}\biggr)dv_{g_{t}}
=\int_{M_{t}}\rho_{T}\notag
\end{eqnarray}
\end{proof}

\begin{proposition}\label{prop6}
Let $M$ be an $m$-dimensional manifold and $C(N)$ the Riemannian cone over an $n$-dimensional Riemannian manifold $(N,g)$. 
Let $F\times[0,T)\rightarrow C(N)$ be a mean curvature flow. 
Then the parabolic rescaling preserves the condition that the mean curvature flow develops a Type I singularity. 
\end{proposition}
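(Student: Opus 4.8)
The plan is to relate the second fundamental form of the rescaled flow $F^{\lambda}_{s}$ to that of the original flow $F_{t}$ at the corresponding time $t = T + s/\lambda^2$, and then read off the Type I bound. First I would set up local coordinates exactly as in the proof of Proposition \ref{aa}: normal local coordinates $(x^i)$ on $M$ at a point $p_0$ for the metric $g_{t_0}$, and coordinates $(y^\alpha)_{\alpha=1}^{n+1}$ on $C(N)$ associated with normal local coordinates of $N$ at $\pi_N(F_{t_0}(p_0))$. The key input is already recorded: $g^{\lambda}_{s_0,ij} = \lambda^2 g_{t_0,ij}$ (equation (\ref{glambda})), so $g^{\lambda\,ij}_{s_0} = \lambda^{-2} g^{ij}_{t_0}$, the Christoffel symbols on $M$ agree, and the cone Christoffel symbols scale by the powers of $\lambda$ listed in Proposition \ref{aa}.

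Next I would compute how $\mathrm{II}$ transforms. Since $\mathrm{II}_{t_0}(X,Y) = (\nabla_{F_{t_0*}X} F_{t_0*}Y)^{\bot}$, the components $\mathrm{II}^{\alpha}_{ij}$ are given by the Gau\ss\ formula (\ref{mcv}) without the $g^{ij}$ trace, i.e. $\mathrm{II}^{\alpha}_{ij} = \partial_i\partial_j F^\alpha - \Gamma^k_{ij}\partial_k F^\alpha + \overline{\Gamma}^{\alpha}_{\beta\gamma}\partial_i F^\beta \partial_j F^\gamma$. Running through the scaling relations exactly as was done for $H$ in (\ref{aaa1})--(\ref{aaa2}), one finds $\mathrm{II}^{\lambda\,\alpha}_{s_0,ij}(p_0) = \mathrm{II}^{\alpha}_{t_0,ij}(p_0)$ for $1 \le \alpha \le n$ and $\mathrm{II}^{\lambda\,n+1}_{s_0,ij}(p_0) = \lambda\,\mathrm{II}^{n+1}_{t_0,ij}(p_0)$; the point is that the $y^{n+1} = r$ coordinate of $F^\lambda$ is $\lambda$ times that of $F$, and the cone Christoffel symbols compensate exactly so that each component of $\mathrm{II}$ picks up the same factor as the corresponding component of $\ArrowF$ or of $H$. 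Then, using $|\mathrm{II}|^2 = g^{ik}g^{j\ell}\overline{g}_{\alpha\beta}\mathrm{II}^\alpha_{ij}\mathrm{II}^\beta_{k\ell}$ together with $\overline{g}_{\alpha\beta} = g_{\alpha\beta}$ for $\alpha,\beta \le n$ (scaling by $r^2$, hence by $\lambda^{-2}$ relative to the rescaled cone metric evaluated at the rescaled point) and $\overline{g}_{n+1\,n+1} = 1$, the four factors of $\lambda$ from the two $g^{-1}$'s and the bookkeeping of the $r$-dependence cancel the factors from $\mathrm{II}$, giving
\begin{align}
|\mathrm{II}^{\lambda}_{s_0}|^2_{g^{\lambda}_{s_0}}(p_0) = \frac{1}{\lambda^2}\,|\mathrm{II}_{t_0}|^2_{g_{t_0}}(p_0).\notag
\end{align}

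Finally I would convert the Type I hypothesis into the Type I conclusion for $F^\lambda$. By definition of parabolic rescaling, $F^{\lambda}$ is defined on $M \times [-\lambda^2 T, 0)$ with maximal time $0$, and a point $(p,s)$ corresponds to $(p, t)$ with $t = T + s/\lambda^2$, so $-s = \lambda^2(T-t)$. If $F$ develops a Type I singularity, there is $C>0$ with $\sup_M |\mathrm{II}_t|^2 \le C/(T-t)$ for all $t \in [0,T)$. Then for $s \in [-\lambda^2 T, 0)$,
\begin{align}
\sup_M |\mathrm{II}^{\lambda}_s|^2 = \frac{1}{\lambda^2}\sup_M |\mathrm{II}_t|^2 \le \frac{1}{\lambda^2}\cdot\frac{C}{T-t} = \frac{C}{\lambda^2(T-t)} = \frac{C}{-s} = \frac{C}{0 - s},\notag
\end{align}
which is exactly the Type I condition for $F^{\lambda}$ at its maximal time $0$. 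The main obstacle is the careful index bookkeeping in the middle step: one must track separately the $\alpha \le n$ and $\alpha = n+1$ components of $\mathrm{II}$, use the precise $\lambda$-scaling of the three types of cone Christoffel symbols from Proposition \ref{aa}, and verify that the $r^2$-weighting in $\overline{g}$ (which differs between the point $F_{t_0}(p_0)$ and the point $F^{\lambda}_{s_0}(p_0)$ whose $r$-coordinates differ by $\lambda$) combines with the $g^{-1}$ factors to yield a clean overall $\lambda^{-2}$; everything else is routine once that computation is in hand.
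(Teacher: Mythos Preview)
Your proposal is correct and follows essentially the same route as the paper: both arguments strip the $g^{ij}$ trace off the mean-curvature computations (\ref{aaa1})--(\ref{aaa2}) to obtain the component scalings $\mathrm{II}^{\lambda\,\alpha}_{s,ij}=\mathrm{II}^{\alpha}_{t,ij}$ for $\alpha\le n$ and $\mathrm{II}^{\lambda\,n+1}_{s,ij}=\lambda\,\mathrm{II}^{n+1}_{t,ij}$, deduce $|\mathrm{II}^{\lambda}_{s}|^2=\lambda^{-2}|\mathrm{II}_{t}|^2$, and then use $-s=\lambda^2(T-t)$ to transfer the Type~I bound. The only cosmetic difference is that the paper phrases the conclusion as the equivalence $(T-t)|\mathrm{II}_t|^2=-s|\mathrm{II}^{\lambda}_s|^2$, which of course follows immediately from your identity and gives both directions at once.
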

\begin{proof}
We have only to show that following two statements are equivalent.

$\bullet$ \ There exists some $c>0$ such that $\sup _{M}\vert\mathrm{II}_{t}\vert^2\leq\frac{c}{T-t}~\mathrm{for~all~}t\in[0,T)$.

$\bullet$ \ There exists some $c'>0$ such that $\sup _{M}\vert\mathrm{II}^{\lambda}_{s}\vert^2\leq\frac{c'}{-s}~\mathrm{for~all~}s\in[-\lambda^2T,0)$.

\noindent
Here $\mathrm{II}_{t}$ and $\mathrm{II}^{\lambda}_{s}$ are the second fundamental form with respect to the immersion $F_{t}:M\rightarrow C(N)$ and $F^{\lambda}_{s}:M\rightarrow C(N)$ respectively. 

We can find a local expression of 
$\mathrm{II}^{\lambda\,\alpha}_{s\,ij}$ and $\mathrm{II}^{\alpha}_{t\,ij}$ immediately 
by removing the inverse of Riemannian metric tensors $g^{\lambda\,ij}_{s}$
($=\frac{1}{\lambda^2}g^{ij}_{t}$) from equalities (\ref{aaa1}) and (\ref{aaa2}).
Hence, we find that $\mathrm{II}^{\lambda\,\alpha}_{s\,ij}=\mathrm{II}^{\alpha}_{t\,ij}$ if $1\leq\alpha\leq n$, 
and $\mathrm{II}^{\lambda\,n+1}_{s\,ij}=\lambda\mathrm{II}^{n+1}_{t\,ij}$ if $\alpha=n+1$, where $s=\lambda^2(t-T)$. 
It then follows that 
\begin{align}
\vert \mathrm{II}^{\lambda}_{s} \vert^2(p)=\frac{1}{\lambda^2}\vert \mathrm{II}_{t} \vert^2(p).\label{same2}
\end{align}
Hence we get
\begin{align}
(T-t)\vert \mathrm{II}_{t} \vert^2=\frac{-s}{\lambda^2}\times\lambda^2 \vert \mathrm{II}^{\lambda}_{s} \vert^2=-s\vert \mathrm{II}^{\lambda}_{s} \vert^2.\label{same}
\end{align}
This mean that parabolic rescaling preserves the condition developing type I singularity. This completes the proof of Proposition \ref{prop6}.
\end{proof}
\section{Self-similar solutions}

This section is devoted to the proof of Theorem \ref{main}.

\begin{proof}[Proof of Theorem \ref{main}]
Take any increasing sequence $\{\lambda_{i}\}_{i=1}^{\infty}$ of the scales of the parabolic rescaling 
such that $\lambda_{i}\rightarrow\infty$ as $i\rightarrow\infty$. 
Let $F^{\lambda_{i}}:M\times[-\lambda_{i}^2T,0)\rightarrow C(N)$ be the parabolic rescaling of the  mean curvature flow $F : M \times[0,T)\rightarrow C(N)$. 
By Proposition \ref{aa}, $F^{\lambda_{i}}$ remains to be a mean curvature flow.

Since $F$ develops type $\mathrm{I}_c$ singularity and in particular type $\mathrm{I}$ singularity,
there exists a positive real number $C > 0$ suth that 
\begin{align}
\sup _{M}\vert\mathrm{II}_{t}\vert^2\leq\frac{C}{T-t}\notag
\end{align}
for all $t$ in $[0,T)$, and 
by Proposition \ref{prop6} the rescaled $F^{\lambda_{i}}$ also develops type $\mathrm{I}$ singularity
satisfying 
\begin{align}
\sup _{M}\vert\mathrm{II}^{\lambda_{i}}_{s}\vert^2\leq\frac{C}{-s}\notag
\end{align}
for all $s$ in $[-\lambda_{i}^2,0)$
with the same constant $C>0$ by (\ref{same}). 
When $s$ is restricted to the interval $[a,b]$, we have the following bound
\begin{align}
\vert\mathrm{II}^{\lambda_{i}}_{s}\vert^2\leq-\frac{C}{b}. \label{SSbound}
\end{align}
Hence we have a uniform bound of the second fundamental form, and since $F^{\lambda_i}$ satisfies the mean curvature flow, all 
the higher derivatives of the second fundamental form are uniformly bounded on $[a,b]$ by \cite{Huisken84}. 

On the other hand, by  Theorem \ref{mnf} the following monotonicity formula for $F^{\lambda_{i}}$ holds. 
\begin{align}
\frac{d}{ds}\int_{M^{\lambda_{i}}_{s}}\rho_{0}=-\int_{M^{\lambda_{i}}_{s}}\rho_{0}\biggl\vert\frac{\overrightarrow{F^{\lambda_i}}^{\bot}}{-2s}+H^{\lambda_{i}}\biggr\vert_{\overline{g}}^2.\notag
\end{align}
Integrating the both side of the above equation on any closed interval $[a,b] \subset (-\infty, 0)$, 
we have
\begin{align}
\int_{M^{\lambda_{i}}_{b}}\rho_{0}-\int_{M^{\lambda_{i}}_{a}}\rho_{0}=-\int_{a}^{b}ds\int_{M^{\lambda_{i}}_{s}}\rho_{0}\biggl\vert\frac{\overrightarrow{F^{\lambda_{i}}}^{\bot}}{-2s}+H^{\lambda_{i}}\biggr\vert_{\overline{g}}^2  \label{int}
\end{align}
where we take $i$ sufficiently large so that $[a,b]$ is contained in $[-\lambda_{i}^2T,0)$. 
By Proposition \ref{prop5} we have
\begin{align}
&\int_{M^{\lambda_{i}}_{a}}\rho_{0}=\int_{M_{u_{i}}}\rho_{T}\notag
\end{align}
where $u_{i}=T+a/{\lambda_{i}^{2}}$ and 
\begin{align}
&\int_{M^{\lambda_{i}}_{b}}\rho_{0}=\int_{M_{v_{i}}}\rho_{T}\notag
\end{align}
where $v_{i}=T+b/{\lambda_{i}^{2}}$.  
By the monotonicity formula, the derivative of the function $\int_{M_{t}}\rho_{T}$ is non-positive and $\int_{M_{t}}\rho_{T}\geq0$, 
so for any increasing sequence $\{t_{i}\}_{i=1}^{\infty}$ such that $t_{i}\rightarrow T$ as $i\rightarrow\infty$ 
the sequence $\int_{M_{t_{i}}}\rho_{T}$ converges to  a unique value. 
Now $\{u_{i}\}_{i=1}^{\infty}$ and $\{v_{i}\}_{i=1}^{\infty}$ are increasing sequences such that $u_{i},v_{i}\rightarrow T$ as $i\rightarrow\infty$. 
So $\int_{M^{\lambda_{i}}_{a}}\rho_{0}$ and $\int_{M^{\lambda_{i}}_{b}}\rho_{0}$ converge to the same value as $i\rightarrow\infty$. 
Therefore the left hand side of the equation (\ref{int}) converges to $0$ as $i\rightarrow\infty$, and thus
\begin{align}
\lim_{i\rightarrow\infty}\int_{a}^{b}ds\int_{M^{\lambda_{i}}_{s}}\rho_{0}\biggl\vert\frac{\overrightarrow{F^{\lambda_{i}}}^{\bot}}{-2s}+H^{\lambda_{i}}\biggr\vert_{\overline{g}}^2 = 0.\label{limit}
\end{align}
From this we can take a sequence $s_i \in [a,b]$ such that we have 
\begin{align}
\int_{M^{\lambda_{i}}_{s_i}}\rho_{0}\biggl\vert\frac{\overrightarrow{F^{\lambda_{i}}}^{\bot}}{-2s_i}+H^{\lambda_{i}}\biggr\vert \to 0\label{limit2}
\end{align}
as $i \to \infty$.

Suppose that $p_i$ attains $\min_{M} r(F^{\lambda_i}_{s_i})$, and put
$$ \gamma_i := r^2(F^{\lambda_i}(p_i,s_i )) = \lambda_i^2 r^2(F(p_i, t_i)).$$
Then $p_i$ also attains $\min_{M} r(F_{t_i})$ and 
\begin{equation}\label{below}
 \gamma_i = \lambda_i^2 r^2(F(p_i,t_i)) 
 = \frac{-s_i r^2(F(p_i,t_i)) }{T - t_i}.
 \end{equation}
It then follows from the condition (c) of Definition \ref{typeI_c} that
\begin{equation}\label{below2}
-bK_1 \le \gamma_i \le -aK_2.
 \end{equation}
Thus, the image of $F^{\lambda_i}(\cdot, s_i)$ 
uniformly stays away from the apex, and that $F^{\lambda_i}(p_i, s_i)$ stays in a compact region in $C(N)$ for the minimum point
$(p_i,s_i)$ for $r(F^{\lambda_i})$. 

Put $\gamma := -bK_1$. Let $W$ be the manifold obtained from $C(N)$ by cutting out the portion $\{ r \le \frac{\sqrt\gamma}2\}$, and let $V$ be the manifold 
obtained by gluing $W$ and $-W$ smoothly along their boundaries. This $V$ contains $C(N) - \{r \le \sqrt\gamma\}$ and the image
of $F^{\lambda_i}|_{(M, s_i)}$ is included in that part. 

Since the higher derivatives of the second fundamental form are bounded as shown above,
we can apply Theorem 1.2 in \cite{Cooper10} (see also \cite{ChenHe10}) by taking $(M_k, p_k)$ to be 
$(M, p_k)$, $(N_k, h_k, x_k)$ to be $(V, h, F^{\lambda_k}(p_k, s_k))$ and $F_k$ to be $F^{\lambda_k}$, 
where the metric $h$ is chosen so that $h$ coincides with the
cone metric on $C(N) - \{r \le \sqrt\gamma\}$. 
Then we obtain a limit $F_\infty : M_\infty \to N_\infty$ which satisfies the equation of self-similar solution 
to the mean curvature flow by (\ref{limit2}). 
But since $x_i = F^{\lambda_i}(p_i, s_i)$ stays in a compact region we have $N_\infty = V$. 
The limiting self similar solution then defines a flow in the cone $C(N)$ satisfying the mean curvature equation. 
This completes the proof of Theorem \ref{main}.
\end{proof}

\begin{example}[Examples of type $\mathrm I_c$ singularities.]\label{example}
Here we show a simple example of the mean curvature flow developing the type $I_c$ singularity.
For $-\infty < a < b \le +\infty$, assume that there exists a mean curvature flow $\Phi : M\times [a,b) \to N$ on $(N,g)$, namely $\Phi$ satisfies $\frac{\partial}{\partial s}\Phi(\cdot,s) = H^N_s$, where $H^N_s$ is the mean curvature vector with respect to the embedding $\Phi(\cdot,s):M\to N$.
Then $F:M\times [0,T(1-e^{-2m(b - a)})) \to C(N)$ defined by

\begin{eqnarray}
F(p,t)&:=&(\Phi(p,\alpha(t)), \beta(t)) \in N\times\mathbb{R}^+,\nonumber\\
\alpha(t) &:=& a - \frac{1}{2m}\log (1 - \frac{t}{T})),\nonumber\\
\beta(t) &:=& \sqrt{2m(T - t)},\nonumber
\end{eqnarray}
becomes a solution for mean curvature flow equation with initial data $F_0=\Phi_0:M\to N\times \{\sqrt{2mT}\}\subset C(N)$, where $m = \dim M$.
The second fundamental form ${\rm II}^{C(N)}_t$ of the embedding $F(\cdot,t):M\to C(N)$ is given by
\begin{eqnarray}
{\rm II}^{C(N)}_t &=& {\rm II}^{N}_{\alpha(t)} - r(F(p,t)) g|_{M_t}\otimes\frac{\partial}{\partial r},\nonumber
\end{eqnarray}
where ${\rm II}^{N}_{\alpha (t)}$ is the second fundamental form of the embedding $M_t = \Phi(M,\alpha (t))\subset N$.
Then we obtain
\begin{eqnarray}
|{\rm II}^{C(N)}_t|_{\bar{g}}^2 \le \frac{m}{2(T - t)} ( 1 + \frac{1}{m^2}\sup_{p\in M}|{\rm II}^{N}_{\alpha(t)}(p)|_g^2),\nonumber
\end{eqnarray}
since $| {\rm II}^{N}_{\alpha(t)} |_{\bar{g}} = r(F(p,t))^{-1}|{\rm II}^{N}_{\alpha(t)}|_g$.
Hence $F$ develops a type $I$ singularity at $t=T$, if $b = +\infty$ and
\begin{eqnarray}
\sup_{p\in M,s \ge a}|{\rm II}^{N}_{s}(p)|_g < \infty.\nonumber
\end{eqnarray}
The condition (b) and (c) of Definition \ref{typeI_c} are obviously satisfied since $r ( F (p,t)) = \sqrt{2m(T - t)}$.

\end{example}

\section{Special Lagrangian submanifolds in toric Calabi-Yau cones}

In this section we construct special Lagrangian submanifolds in toric Calabi-Yau cones. 
Let $V$ be a Ricci-flat K\"ahler manifold with a K\"ahler form $\omega$ and of $\dim_\bfC V = n$.
Then the canonical line bundle $K_V$ is flat. $V$ is said to be a Calabi-Yau manifold if in addition $K_V$ is trivial
and $V$ admits a parallel holomorphic $n$-form $\Omega$. This implies that, with a suitable normalization of $\Omega$, we have
$$ \frac{\omega^n}{n!} =  (-1)^{\frac{n(n-1)}2} \biggl(\frac{\sqrt{-1}}2\biggr)^n \Omega \wedge \overline{\Omega}.$$
Let $L$ be a real oriented $n$-dimensional submanifold of $V$. Then $L$ is called a special Lagrangian submanifold of $V$ if
$\omega|_L = 0$ and $\mathrm{Im}\Omega|_L = 0$.

Toric Calabi-Yau cones are exactly the K\"ahler cones over Sasaki-Einstein manifolds. They are described as toric K\"ahler cones
obtained from toric diagram of height $1$. This result was obtained in \cite{FOW} and \cite{CFO}, which we outline below.

\begin{definition}[Good rational polyhedral cones, c.f. \cite{Lerman}] \label{good} Let $\mathfrak g^{\ast}$ be the dual of the Lie algebra $\mathfrak g$
of an $n$-dimensional torus $G$. Let $\bfZ_{\mathfrak g}$ be the integral lattice of $\mathfrak g$, that is
the kernel of the exponential map $\exp : \mathfrak g \to G$. 
A subset $C \subset \mathfrak g^{\ast}$ is a rational polyhedral cone if there exists a
finite set of vectors $\lambda_i \in \bfZ_{\mathfrak g}$, $1 \le i \le d$, such that
$$ C = \{ y \in \mathfrak g^{\ast}\ |\ \la y, \lambda_i \ra \ge 0\ \mathrm{for\ }\ i = 1, \cdots, d\}.$$
We assume that the set $\lambda_i$ is minimal in that for any $j$
$$ C \ne \{ y \in \mathfrak g^{\ast}\ |\ \la y, \lambda_i \ra \ge 0\ \mathrm{for\ all}\ i\ne j\}$$
and that each $\lambda_i$ is primitive, i.e. $\lambda_i$ is not of the form $\lambda_i = a\mu$
for an integer $a \ge 2$ and $\mu \in \bfZ_{\mathfrak g}$. 
(Thus $d$ is
the number of facets if $C$ has non-empty interior.)
Under these two assumptions
a rational polyhedral cone $C$ with nonempty interior is said to be good if the following condition holds.
If
$$ \{ y \in C\ |\ \la y, \lambda_{i_j}\ra = 0\ \mathrm{for\ all}\ j = 1, \cdots, k \}$$
is a non-empty face of $C$ for some $\{i_1, \cdots, i_k\} \subset \{1, \cdots, d\}$, then 
$\lambda_{i_1}, \cdots, \lambda_{i_k}$ are linearly independent
over $\bfZ$ and generates the subgroup
$\{ \sum_{j=1}^k a_j \lambda_{i_j}\ |\ a_j \in \bfR\} \cap \bfZ_{\mathfrak g} $.
\end{definition}

\begin{definition}[Toric diagrams of height $\ell$, c.f. \cite{CFO}] \label{TD1} An $n$-dimensional toric diagram with height $\ell$ is a
collection of $\lambda_i \in \bfZ^{n} \cong 
\bfZ_{\mathfrak g}$ which define a good rational polyhedral cone and 
$\gamma  \in \bfQ^{n} \cong
(\bfQ_{\mathfrak g})^{\ast}$
 such that 
\begin{enumerate}
\item[(1)] $\ell$ is a positive integer such that $\ell\gamma$ is a primitive element of
the integer lattice $\bfZ^{n} \cong \bfZ^{\ast}_{\mathfrak g}$.
\item[(2)] $\la \gamma, \lambda_i\ra = -1$. 
\end{enumerate}
We say that a good rational polyhedral
cone $C$ is associated with a toric diagram of height $\ell$ if there exists a rational vector $\gamma$ 
satisfying $(1)$ and $(2)$ above.
\end{definition}
The reason why we use the terminology ``height $\ell$'' is because using a transformation by an element of $SL(n,\bfZ)$ we may assume that
$$ \gamma = \left(\begin{array}{r} -\frac1\ell \\ 0 \\ \vdots \\ 0 \end{array}\right)$$
and the first component of $\lambda_i$ is equal to $\ell$ for each $i$.

\begin{theorem}[\cite{FOW}, \cite{CFO}] \label{toricSE} Toric Sasaki-Einstein manifolds are exactly those whose K\"ahler cones are obtained 
by the Delzant construction from toric diagram of fixed height and applying the volume minimization of
Martelli-Sparks-Yau \cite{MSY2}. Equivalently, Toric Ricci-flat K\"ahler manifolds are exactly those obtained 
by the Delzant construction from toric diagram of fixed height and applying the volume minimization of
Martelli-Sparks-Yau \cite{MSY2}.
\end{theorem}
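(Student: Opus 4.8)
The plan is to reduce Theorem \ref{toricSE} to the structure theory of toric Kähler cones and the existence result for Sasaki–Einstein metrics obtained via volume minimization. First I would recall that a toric Kähler cone $C(N)$ of complex dimension $n$ carries an effective Hamiltonian action of the real torus $G = T^n$, and by the Delzant-type correspondence (for cones, as in \cite{Lerman}) such cones are classified by their moment cone $C \subset \mathfrak g^\ast$, which must be a good rational polyhedral cone in the sense of Definition \ref{good}. Conversely, the Delzant construction produces from any good rational polyhedral cone a toric Kähler cone realizing it as moment cone. So the first step establishes a bijection between toric Kähler cones and good rational polyhedral cones.

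The second step is to identify which of these cones can carry a Ricci-flat Kähler cone metric, equivalently a Sasaki–Einstein metric on the link $N$. The Ricci-flatness forces the canonical bundle of $C(N)$ (or of a suitable partial resolution / of the Delzant quotient) to be trivial in an equivariant sense, which translates into the existence of a rational vector $\gamma \in (\mathfrak q_{\mathfrak g})^\ast$ with $\langle \gamma, \lambda_i\rangle = -1$ for all the defining normals $\lambda_i$; this is precisely the statement that $C$ is associated with a toric diagram of some height $\ell$ (Definition \ref{TD1}). The integrality of $\ell\gamma$ and primitivity give the normalization statement after an $SL(n,\bfZ)$ change of basis. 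Here I would cite \cite{CFO} for the ``height $\ell$'' formulation and \cite{FOW} for the height $1$ case, noting that a toric diagram of height $\ell$ can be reduced to one of height $1$ by passing to an appropriate quotient or cover, so ``fixed height'' is the correct level of generality.

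The third step is existence: given a good rational polyhedral cone associated with a toric diagram, one must actually produce the Sasaki–Einstein metric. By the work of Martelli–Sparks–Yau \cite{MSY2}, the Einstein–Hilbert functional (equivalently, the normalized volume) restricted to the space of Reeb vector fields lying in the Reeb cone is strictly convex and proper, hence attains a unique minimum; at the minimizer the transverse Futaki invariant vanishes, and then the Sasaki analogue of the Calabi conjecture (solved in \cite{FOW} via a transverse Monge–Ampère equation, using that $c_1^B > 0$ and $c_1$ of the normal bundle is trivial) yields a transverse Kähler–Einstein metric, hence a Sasaki–Einstein metric. Conversely any toric Sasaki–Einstein structure has its Reeb field at such a minimum, by the volume minimization principle. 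Combining the three steps gives both formulations in the statement, the second being just the cone-vs-link restatement via $C(N)$ Ricci-flat $\iff$ $N$ Sasaki–Einstein.

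The main obstacle in writing this out honestly is the third step: the existence of the Sasaki–Einstein metric is not elementary and genuinely rests on the solution of a complex Monge–Ampère equation on the transverse space together with the convexity analysis of the volume functional. Since both inputs are exactly the content of \cite{FOW}, \cite{CFO} and \cite{MSY2}, the "proof" here is really an assembly and translation into the language of toric diagrams of height $\ell$; the only genuine mathematical content I would need to verify carefully is the dictionary between the triviality of the (equivariant) canonical bundle and the existence of the vector $\gamma$ with $\langle\gamma,\lambda_i\rangle = -1$, and the reduction of arbitrary height to height $1$, which is a lattice computation. I would therefore present the argument as a guided outline with precise citations rather than a self-contained proof.
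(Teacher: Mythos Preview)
The paper does not prove this theorem at all: it is stated as a result of \cite{FOW} and \cite{CFO} and used as a black box, with no proof environment following the statement. Your outline is a reasonable summary of the strategy in those references (Delzant-type classification of toric K\"ahler cones via good rational polyhedral cones, the height-$\ell$ condition as the combinatorial shadow of triviality of $K_{C(N)}^{\otimes\ell}$, and existence via volume minimization plus the transverse Monge--Amp\`ere argument), and you are right to flag that the genuine analytic content lies entirely in \cite{FOW}, \cite{CFO}, \cite{MSY2}. Since the paper itself offers nothing beyond the citation, there is no in-paper proof to compare against; your decision to present this as a guided outline with citations is exactly what the situation calls for.
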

\noindent
 For a Ricci-flat toric K\"ahler cone $V$ obtained from a toric diagram of height $\ell$, there
exists a parallel holomorphic section of $K_V^{\otimes\ell}$. In particular if $\ell = 1$ the K\"ahler cone manifold $V$ is a Calabi-Yau manifold.
From now on we assume $\ell =1$. Then it is shown in \cite{CFO} that the parallel holomorphic $n$-form is given in the form
$$ \Omega = e^{-\sum_{i=1}^n \gamma_i z^i} dz^1 \wedge \cdots \wedge dz^n $$
where $z^1, \cdots, z^n$ are holomorphic logarithmic coordinates. Since $V$ is obtained from a toric diagram of height $1$ we 
may assume $\gamma = {}^t(-1, 0, \cdots, 0)$. In this case we have
$$ \Omega = e^{z^1}dz^1 \wedge \cdots \wedge dz^n.$$

We now apply a method used in \cite{IonelMinoo08} and \cite{Kawai11}. Their method is summarized in \cite{Kawai11} as follows.
\begin{proposition}[\cite{Kawai11}]\label{Kawai}
Let $(V,J,\omega, \Omega)$ be a Calabi-Yau manifold of complex dimension $n$, and $H$ be a compact connected Lie group of
real dimension $n-1$ acting effectively on $V$ preserving the Calabi-Yau structure. Suppose there exist a moment map $\mu : V \to \frak h^\ast$
and a $H$-invariant $(n-1)$-form $\alpha$ such that for any $X_1, \cdots, X_{n-1} \in \frak h$ we have
$$ \mathrm{Im} \Omega (\cdot,X_1, \cdots, X_{n-1}) = d(\alpha(X_1, \cdots, X_{n-1}))$$
where $X_i \in \frak h$ are identified with vector fields on $V$. Then for any $c \in Z_{\frak h^\ast}$, $c' \in \bfR$ and any basis $\{Y_1, \cdots, Y_{n-1}\}
 \subset \frak h$, the set
 $$ L_{c,\,c'} = \mu^{-1}(c) \cap (\alpha(Y_1, \cdots, Y_{n-1}))^{-1}(c') $$
 is a $H$-invariant special Lagrangian submanifold of $V$. 
\end{proposition}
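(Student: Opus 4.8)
The plan is to verify directly that $L_{c,c'}$ is $n$-dimensional, isotropic for $\omega$, and annihilated by $\mathrm{Im}\,\Omega$, which together force it to be special Lagrangian. First I would record the relevant dimensions: $\mu^{-1}(c)$ has real dimension $2n - (n-1) = n+1$ at regular values $c$ (using that $H$ acts effectively, so the moment map is a submersion away from points with nontrivial stabilizer; one works on the open dense set where the $H$-action is free, which is where the claim is really being made), and intersecting with one more level set $(\alpha(Y_1,\dots,Y_{n-1}))^{-1}(c')$ cuts the dimension down by one more to $n$, provided this last function has nonvanishing differential along $\mu^{-1}(c)$. The $H$-invariance of $L_{c,c'}$ is immediate since both $\mu$ and $\alpha(Y_1,\dots,Y_{n-1})$ are $H$-invariant.

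Next I would check the Lagrangian (isotropic) condition. The tangent space $T_pL_{c,c'}$ is contained in $\ker d\mu_p \cap \ker d(\alpha(Y_1,\dots,Y_{n-1}))_p$. The standard moment-map identity $d\mu^X = \iota_{X}\omega$ (where $X$ also denotes the generating vector field of $X \in \frak h$) shows that $T_p\mu^{-1}(c) = (\frak h \cdot p)^{\omega}$, the $\omega$-orthogonal complement of the tangent space to the $H$-orbit. Since $\dim H = n-1$ and $\mu^{-1}(c)$ is coisotropic of dimension $n+1$, its $\omega$-null distribution is exactly the $(n-1)$-dimensional orbit directions $\frak h\cdot p$. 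So to show $L_{c,c'}$ is isotropic it suffices to show $T_pL_{c,c'}$ contains $\frak h\cdot p$ — which it does, because $L_{c,c'}$ is $H$-invariant of dimension $n$, hence contains the $(n-1)$-dimensional orbit — and that the remaining one-dimensional complement pairs trivially under $\omega$ with everything in $T_pL$; a dimension count ($n = (n-1)+1$ inside the coisotropic $\mu^{-1}(c)$) then gives that $T_pL_{c,c'}$ is a maximal isotropic, i.e.\ Lagrangian, subspace. I would phrase this as: $L_{c,c'}/H$ sits inside the symplectic reduction $\mu^{-1}(c)/H$ as a hypersurface, and any hypersurface... actually more cleanly, any $H$-invariant isotropic-containing-the-orbit submanifold of the right dimension is Lagrangian; the one extra condition from $\alpha$ is exactly what pins down the Lagrangian leaf.

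Finally, for $\mathrm{Im}\,\Omega|_{L_{c,c'}} = 0$: since $L_{c,c'}$ is already Lagrangian and $n$-dimensional, $\mathrm{Im}\,\Omega|_L$ is either identically zero or nowhere zero (a Lagrangian plane is special Lagrangian with some phase, and the phase $e^{i\theta}$ varies continuously; one shows $d\theta = 0$ along $L$ for Lagrangian $L$ in a Calabi–Yau, so the phase is locally constant — but here I want it to be exactly $0$, not just constant). The hypothesis $\mathrm{Im}\,\Omega(\cdot, X_1,\dots,X_{n-1}) = d(\alpha(X_1,\dots,X_{n-1}))$ is the crucial input: evaluating $\mathrm{Im}\,\Omega$ on $n$ tangent vectors of $L_{c,c'}$, at least $n-1$ of which can be taken to be the orbit directions $Y_1,\dots,Y_{n-1}$ (which span $\frak h\cdot p \subset T_pL$), gives $\mathrm{Im}\,\Omega(v, Y_1,\dots,Y_{n-1}) = d(\alpha(Y_1,\dots,Y_{n-1}))(v) = 0$ for every $v \in T_pL_{c,c'}$, since $\alpha(Y_1,\dots,Y_{n-1})$ is constant ($=c'$) on $L_{c,c'}$. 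As the $Y_i$ span the orbit tangent space and any $n$ tangent vectors to $L$ must include at least one dependence on the single non-orbit direction, antisymmetry of the $n$-form reduces every evaluation to this case, so $\mathrm{Im}\,\Omega|_{L_{c,c'}} = 0$. The main obstacle is the careful dimension and transversality bookkeeping — ensuring $c$ is a regular value, that $d(\alpha(Y_1,\dots,Y_{n-1}))$ is nonzero transverse to $\mu^{-1}(c)$ so that $L_{c,c'}$ is genuinely an $n$-manifold, and restricting to the locus where the $H$-action is free so that $\frak h\cdot p$ really is $(n-1)$-dimensional; the three defining-condition verifications themselves are then short linear algebra.
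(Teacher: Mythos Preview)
Your argument is correct. The paper itself does not give a proof of this proposition; it simply states ``We refer the reader to \cite{Kawai11} for the proof of Proposition \ref{Kawai}.'' Your approach --- dimension count, Lagrangian via $\frak h\cdot p \subset T_pL \subset (\frak h\cdot p)^\omega$, and $\mathrm{Im}\,\Omega|_L=0$ by evaluating on the basis $v,Y_1,\dots,Y_{n-1}$ of $T_pL$ and using the hypothesis together with constancy of $\alpha(Y_1,\dots,Y_{n-1})$ on $L$ --- is the standard cohomogeneity-one argument, and is essentially what appears in \cite{Kawai11} and \cite{IonelMinoo08}.

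One minor remark: your isotropy paragraph is more roundabout than necessary. Once you have $\frak h\cdot p \subset T_pL \subset T_p\mu^{-1}(c) = (\frak h\cdot p)^\omega$ and $\dim T_pL = n$, isotropy follows in one line: write $T_pL = \frak h\cdot p \oplus \bfR\xi$; then $\omega$ vanishes on pairs from $\frak h\cdot p$ (null distribution), on mixed pairs (since $\xi\in(\frak h\cdot p)^\omega$), and on $(\xi,\xi)$ trivially. The detour through symplectic reduction and ``hypersurface in $\mu^{-1}(c)/H$'' is not needed. Your acknowledgment of the transversality caveats (regular value of $\mu$, nonvanishing of $d(\alpha(Y_1,\dots,Y_{n-1}))$ along $\mu^{-1}(c)$, free locus of the $H$-action) is appropriate; these are implicit in the statement and are where the genuine content of ``$L_{c,c'}$ is a submanifold'' lies.
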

\noindent
We refer the reader to \cite{Kawai11} for the proof of Proposition \ref{Kawai}. We now apply Proposition \ref{Kawai} to toric Calabi-Yau 
manifold obtained from toric diagrams of height $1$ with
$$ \Omega = e^{z^1}dz^1 \wedge \cdots \wedge dz^n, \quad\quad \alpha = \mathrm{Im} (e^{z^1}dz^2 \wedge \cdots \wedge dz^n),$$
and with $Y_j = 2\mathrm{Im} (\frac{\partial}{\partial z^j})$ and $H$ the subtorus $T^{n-1}$ generated by $Y_1, \cdots, Y_{n-1}$. Then one easily finds that
$$ \mathrm{Im} \Omega (\cdot,Y_1, \cdots, Y_{n-1}) = d(\alpha(Y_1, \cdots, Y_{n-1})), $$
and
$$ \alpha(Y_1, \cdots, Y_{n-1}) = \frac 1{ i^n} (e^{z^1} + (-1)^n e^{\overline{z_1}}). $$
Thus the assumptions of Proposition \ref{Kawai} is satisfied, and we have proved the following.
\begin{theorem}\label{SLagtoric} Let $V$ be a toric Calabi-Yau manifold obtained from a toric diagram of height $1$. Let 
$$ \Omega = e^{z^1} dz^1 \wedge \cdots \wedge dz^n $$
be the parallel holomorphic $n$-form described as above. Then there is a $T^{n-1}$-invariant special Lagrangian submanifold
described as 
$$\mu^{-1}(c) \cap \{(e^{z^1} + (-1)^n e^{\overline{z_1}})/i^n = c'\}$$ 
where $T^{n-1}$ is a subtorus generated by
$\mathrm{Im}(\partial/\partial z^2), \cdots, \mathrm{Im}(\partial/\partial z^n)$ and $\mu : V \to \frak h^\ast$ is a moment map.
\end{theorem}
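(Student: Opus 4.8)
The plan is to verify that all hypotheses of Proposition \ref{Kawai} hold for the data $(\Omega, \alpha, H, Y_1, \dots, Y_{n-1})$ described just before the statement, and then read off the conclusion. The only things that genuinely require checking are: (i) the $Y_j = 2\,\mathrm{Im}(\partial/\partial z^j)$ are indeed the generators of an effective Hamiltonian $T^{n-1}$-action preserving the Calabi-Yau structure, with a moment map $\mu$ (this is automatic from the toric structure of $V$, since $V$ is a toric Ka\"hler cone and the $z^j$ are holomorphic logarithmic coordinates, so $\mathrm{Im}(\partial/\partial z^j)$ are exactly the generating Killing fields of the torus action); (ii) the form $\alpha = \mathrm{Im}(e^{z^1} dz^2 \wedge \cdots \wedge dz^n)$ is $H$-invariant; and (iii) the key identity $\mathrm{Im}\,\Omega(\cdot, Y_1, \dots, Y_{n-1}) = d\bigl(\alpha(Y_1,\dots,Y_{n-1})\bigr)$.

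For step (iii), which is the heart of the matter, I would compute directly in the logarithmic coordinates. Since $\Omega = e^{z^1} dz^1 \wedge dz^2 \wedge \cdots \wedge dz^n$ and contracting with the $n-1$ vector fields $Y_1, \dots, Y_{n-1}$ (tangent to the imaginary directions of $z^2, \dots, z^n$) picks out the $dz^1$ component up to a constant factor $\bigl(\tfrac{1}{i}\bigr)^{n-1}$ times $\prod$ of the pairings, one gets $\Omega(\cdot, Y_1, \dots, Y_{n-1}) = \mathrm{const}\cdot e^{z^1} dz^1$. Taking imaginary parts and matching with $d$ of $\alpha(Y_1,\dots,Y_{n-1})$ is then an elementary exterior-derivative computation: $\alpha(Y_1,\dots,Y_{n-1})$ is a function of $z^1$ and $\overline{z^1}$ only, namely a constant multiple of $e^{z^1} + (-1)^n e^{\overline{z^1}}$, and $d$ of it reproduces the required $1$-form since $e^{z^1}dz^1$ is holomorphic exact. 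This is exactly the content of the two displayed identities stated right before the theorem, so the proof amounts to justifying those displays and then invoking Proposition \ref{Kawai}.

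The remaining bookkeeping is to identify $\mathfrak h = \mathrm{Lie}(T^{n-1})$ with its dual appropriately, choose $c \in Z_{\mathfrak h^\ast}$ and $c' \in \bfR$, and note that $L_{c,c'} = \mu^{-1}(c) \cap \bigl\{\alpha(Y_1,\dots,Y_{n-1}) = c'\bigr\}$ is $H$-invariant and special Lagrangian by Proposition \ref{Kawai}; rewriting $\alpha(Y_1,\dots,Y_{n-1}) = c'$ as $(e^{z^1} + (-1)^n e^{\overline{z^1}})/i^n = c'$ (absorbing the harmless nonzero constant into $c'$) gives the stated description.

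I expect the main obstacle to be purely notational rather than conceptual: keeping track of the constant factors (powers of $i$, signs $(-1)^n$, and the normalization of $\Omega$ relative to $\omega^n/n!$) so that the computed form $\alpha(Y_1,\dots,Y_{n-1})$ matches $(e^{z^1} + (-1)^n e^{\overline{z^1}})/i^n$ on the nose, and making sure the contraction conventions for $\mathrm{Im}\,\Omega(\cdot, Y_1, \dots, Y_{n-1})$ are consistent with those used in the statement of Proposition \ref{Kawai}. Once the constants are pinned down, everything else follows formally from the cited proposition.
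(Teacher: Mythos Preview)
Your proposal is correct and follows exactly the paper's approach: set up the data $\Omega$, $\alpha = \mathrm{Im}(e^{z^1}dz^2\wedge\cdots\wedge dz^n)$, $Y_j = 2\,\mathrm{Im}(\partial/\partial z^{j+1})$, and $H = T^{n-1}$, verify the identity $\mathrm{Im}\,\Omega(\cdot,Y_1,\dots,Y_{n-1}) = d(\alpha(Y_1,\dots,Y_{n-1}))$ together with the formula $\alpha(Y_1,\dots,Y_{n-1}) = (e^{z^1} + (-1)^n e^{\overline{z^1}})/i^n$, and then invoke Proposition~\ref{Kawai}. Your write-up is in fact more explicit than the paper's, which simply asserts that ``one easily finds'' these identities and concludes.
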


\begin{example} Take $V$ to be the flat $\bfC^n$, and let $w^1, \cdots, w^n$ be the standard holomorphic coordinates with
$$ \Omega = dw^1 \wedge \cdots \wedge dw^n.$$
The logarithmic holomorphic coordinates $v^1, \cdots , v^n$ are given by $w^i = e^{v^i}$. Thus, we have
$$ \Omega = e^{(v^1 + \cdots + v^n)}dv^1 \wedge \cdots \wedge dv^n.$$
Taking $\gamma = {}^t(-1, 0, \cdots, 0)$ amounts to changing the coordinates $z^1 = v^1 + \cdots + v^n$, $z^2 = v^2$, $\cdots$, $z^n = v^n$.
Then with the new coordinates we have
$$ \Omega = e^{z^1} dz^1 \wedge \cdots \wedge dz^n.$$
In this situation the points in $\mu^{-1}(c)$ are described as
$$ |w^2|^2 - |w^1|^2 = c_2,\ \cdots,\ |w^n|^2-|w^1|^2 = c_n.$$
If $n$ is even then $(e^{z^1} + (-1)^n e^{\overline{z_1}})/i^n = c'$ if and only if $\mathrm{Re} (w^1 \cdots w^n) = c'$, and 
If $n$ is odd then $(e^{z^1} + (-1)^n e^{\overline{z_1}})/i^n = c'$ if and only if $\mathrm{Im} (w^1 \cdots w^n) = c'$.
This is exactly the same as Theorem 3.1 in \cite{HarveyLawson82Acta}.
\end{example}

\section{Infinitesimal deformations of special Lagrangian cones}
In this section we consider the infinitesimal deformations of special Lagrangian cones embedded in the cone of Sasaki-Einstein manifolds.

\begin{definition}
{\rm A Riemannian manifold $(N,g)$ is called a} Sasakian manifold {\rm if its Riemannian cone $(C(N),\bar{g})$ is a K\"ahler manifold with respect to some integrable complex structure $J$ over $C(N)$. A} Reeb vector field {\rm $\xi$ on the Sasakian manifold $(N,g)$ is a Killing vector field on $N$ given by $\xi:=J(r\frac{\partial}{\partial r})$.}
\end{definition}

For a Sasakian manifold $(N,g)$, a contact form $\eta \in \Omega^1(N)$ on $N$ is given by $\eta:=g(\xi,\cdot)$. Then the K\"ahler form $\omega\in \Omega^2(C(N))$ on $C(N)$ is described as $\omega = d(r^2\eta)$.

\begin{definition}
{\rm For a smooth manifold $N$, a} cone submanifold $C$ of $C(N)$ {\rm is a submanifold of $C(N)$ which can be written as $C=C(\Sigma)$ for a submanifold $\Sigma \subset N$. For a Sasakian manifold $(N,g,\xi)$, a cone submanifold $C\subset C(N)$ is a} Lagrangian cone {\rm if it is a Lagrangian submanifold of $(C(N),\omega)$. }
\end{definition}

The following proposition is well-known but here we give a proof for readers' convenience.

\begin{proposition}\label{legendrian}
A submanifold $\Sigma \subset N$ is Legendrian if and only if $C(\Sigma) = \Sigma \times \mathbb{R}^+ \subset C(N)$ is Lagrangian with respect to the K\"aler form $\omega$ on $C(N)$.
\end{proposition}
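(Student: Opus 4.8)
The plan is to unwind the definitions on both sides and reduce the statement to a pointwise identity about the $1$-form $r^2\eta$ restricted to $C(\Sigma)$. Recall that $\Sigma \subset N$ is \emph{Legendrian} means that $\dim \Sigma = (n-1)/2$ (so that $C(\Sigma)$ has the right dimension $n$ to be Lagrangian, where $n = \dim C(N)$, i.e. $N$ is $(n-1)$-dimensional, $n$ even... more precisely with $\dim C(N) = 2p$ and $\dim\Sigma = p-1$) and $\eta|_\Sigma = 0$, i.e. $T\Sigma \subset \ker\eta$. The Lagrangian condition for $C(\Sigma)$ is that $\dim C(\Sigma) = \frac12\dim C(N)$ and $\omega|_{C(\Sigma)} = 0$. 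Since $\dim C(\Sigma) = \dim\Sigma + 1$ and $\dim C(N) = \dim N + 1$, the dimension counts match automatically, so the real content is the vanishing $\omega|_{C(\Sigma)} = 0 \iff \eta|_\Sigma = 0$.

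First I would use $\omega = d(r^2\eta)$ and compute $\omega|_{C(\Sigma)}$ directly: expanding, $\omega = 2r\,dr\wedge\eta + r^2\,d\eta$. A tangent space to $C(\Sigma)$ at a point $(x,r)$ is spanned by $r\frac{\partial}{\partial r}$ together with vectors tangent to $\Sigma$. Since $r > 0$ on the cone, the coefficients $2r$ and $r^2$ never vanish, so it suffices to analyze $dr\wedge\eta + \frac{r}{2}\,d\eta$ on these vectors — or better, pull back by the inclusion and use that $dr$ pulls back nontrivially only in the $\frac{\partial}{\partial r}$ direction. Plugging in $X = r\frac{\partial}{\partial r}$ and $Y \in T\Sigma$: since $\eta$ is a form on $N$ (pulled back to $C(N)$), $\iota_{\partial/\partial r}\eta = 0$ and $\iota_{\partial/\partial r}d\eta = 0$, so $\omega(r\tfrac{\partial}{\partial r}, Y) = 2r\,dr(r\tfrac{\partial}{\partial r})\,\eta(Y) = 2r^2\eta(Y)$. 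Thus $\omega(r\tfrac{\partial}{\partial r}, Y) = 0$ for all $Y \in T\Sigma$ if and only if $\eta|_\Sigma = 0$. For the remaining pairs $Y, Y' \in T\Sigma$, one gets $\omega(Y,Y') = r^2\,d\eta(Y,Y')$; but if $\eta|_\Sigma = 0$ then $d(\eta|_\Sigma) = (d\eta)|_\Sigma = 0$, so this term vanishes as well. Hence $\eta|_\Sigma = 0 \Rightarrow \omega|_{C(\Sigma)} = 0$, and conversely if $\omega|_{C(\Sigma)} = 0$ then already the mixed terms force $\eta|_\Sigma = 0$.

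The main (and only mild) obstacle is bookkeeping the two conditions packaged in the words ``Legendrian'' and ``Lagrangian'': one must observe that the dimension requirements are equivalent under the cone construction and so need no separate argument, and one must be careful that $\eta$ and $d\eta$ are genuinely pulled back from $N$, which is what kills all the $\frac{\partial}{\partial r}$-contractions. Once that is set up the computation is a two-line exterior-algebra check. I would present it by fixing an arbitrary point of $C(\Sigma)$, choosing a basis of the tangent space adapted to the splitting $T(C(\Sigma)) = \mathbb{R}\cdot r\frac{\partial}{\partial r} \oplus T\Sigma$, and evaluating $\omega = 2r\,dr\wedge\eta + r^2\,d\eta$ on all pairs of basis vectors, reading off the equivalence.
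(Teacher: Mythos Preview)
Your argument is correct and follows essentially the same strategy as the paper: split $T(C(\Sigma))$ into the $r\frac{\partial}{\partial r}$ direction and $T\Sigma$, then check $\omega$ on mixed pairs and on pairs tangent to $\Sigma$. The only cosmetic difference is that for the pure $\Sigma$-pairs you invoke naturality of the exterior derivative, $(d\eta)|_\Sigma = d(\eta|_\Sigma) = 0$, whereas the paper writes out the Cartan formula $d\eta(u,v) = -\eta([u,v])$ and uses that $[u,v]$ is again tangent to $\Sigma$; these are the same fact in different clothing.
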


\begin{proof}
Let $\Sigma\subset N$ be a Legendrian submanifold. For any $p\in \Sigma$, open neighborhood $U\subset \Sigma$ and $u,v\in \mathcal{X}(U)$, we have
\begin{eqnarray}
\omega(u,v) &=& d\eta (u,v) = -\eta ([u,v]) = 0,\nonumber\\
\omega(u,\frac{\partial}{\partial r}) &=& g(u,\xi) = \eta(u) = 0,\nonumber
\end{eqnarray}
since $\eta|_\Sigma = 0$ and $[u,v]\in \mathcal{X}(U)$. Hence $C(\Sigma)\subset C(N)$ is Lagrangian. Conversely, let $C(\Sigma)\subset C(N)$ be Lagrangian and take $u\in T_p\Sigma$ arbitrarily. Then
\begin{eqnarray}
\eta(u) = g(u,\xi) = \omega(u,\frac{\partial}{\partial r}) = 0,\nonumber
\end{eqnarray}
which implies that $\Sigma\subset N$ is a Legendrian submanifold.
\end{proof}

\begin{proposition}\label{hol.vol.}
Let $(V,J,\omega)$ be a Ricci-flat K\"ahler manifold of $\dim_{\mathbb{C}} = n$ with $H^1_{DR}(V,\mathbb{R}) = 0$, and assume that the canonical line bundle $K_{V}$ is holomorphically trivial. Then there exists a holomorphic $n$ form $\Omega\in\Omega^{(n,0)}(V)$ satisfying
\begin{eqnarray}
\frac{\omega^n}{n!} = (-1)^{\frac{n(n-1)}{2}}\bigg( \frac{\sqrt{-1}}{2} \bigg)^n\Omega\wedge \overline{\Omega}.\label{normalization}
\end{eqnarray}
\end{proposition}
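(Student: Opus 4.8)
The plan is to build $\Omega$ in two steps: first produce a nowhere-vanishing holomorphic $n$-form, then rescale it by a constant so that the normalization $(\ref{normalization})$ holds. Since $K_V$ is holomorphically trivial by hypothesis, there is a nowhere-vanishing holomorphic section $\Omega_0 \in \Omega^{(n,0)}(V)$; this is just a trivializing section of the holomorphic line bundle $K_V$. The key point is that the Ricci-flatness of $\omega$ forces $\Omega_0$ to be parallel up to a constant factor, which is what makes the pointwise normalization globally consistent.

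First I would observe that for any nowhere-vanishing holomorphic $n$-form $\Omega_0$ on a K\"ahler manifold of complex dimension $n$, the top form $\Omega_0 \wedge \overline{\Omega_0}$ is a (nonzero) multiple of the volume form, so one may write
\begin{equation}
(-1)^{\frac{n(n-1)}{2}}\Bigl(\frac{\sqrt{-1}}{2}\Bigr)^n \Omega_0 \wedge \overline{\Omega_0} = e^{2f}\,\frac{\omega^n}{n!}\notag
\end{equation}
for a smooth real-valued function $f$ on $V$ (the sign and power-of-$i$ prefactor is exactly the one that makes the left side a positive multiple of the volume form). A standard computation in K\"ahler geometry shows that $f$ is, up to a constant, the Ricci potential: the Ricci form of $\omega$ equals $-\sqrt{-1}\,\partial\bar\partial \log\|\Omega_0\|^2$ in a suitable normalization, and since $\omega$ is Ricci-flat, $\partial\bar\partial f = 0$, so $f$ is pluriharmonic. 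Then $f$ is harmonic with respect to the K\"ahler metric. The hypothesis $H^1_{DR}(V,\mathbb{R}) = 0$ enters here: a pluriharmonic function has $df$ closed and, one checks, the $(1,0)$-part $\partial f$ is a closed holomorphic $1$-form; vanishing of $H^1_{DR}$ (hence of $H^0(V,\Omega^1_V)$ by Hodge-type reasoning, or more directly by writing $\partial f = dh$ for the closed real $1$-form argument) forces $\partial f$ to be exact and ultimately $f$ to be constant. I would then set $\Omega := e^{-c}\,\Omega_0$ with $e^{2c}$ equal to that constant value of $e^{2f}$, and $(\ref{normalization})$ holds identically.

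The main obstacle is the step that forces $f$ to be \emph{constant} rather than merely pluriharmonic: on a noncompact manifold a bounded harmonic function need not be constant, so one genuinely needs to use the hypothesis $H^1_{DR}(V,\mathbb{R}) = 0$ and the holomorphicity built into the problem, not just the maximum principle. The clean way is to show $\partial f$ is a holomorphic $1$-form (from $\bar\partial \partial f = \partial\bar\partial f = 0$ together with $\partial(\partial f) = 0$), observe it is closed, invoke $H^1_{DR}(V,\mathbb{R}) = 0$ to write it as $d$ of a function, and then match types to conclude $\partial f = \partial g$ for a \emph{pluriharmonic} $g$ — iterating is not progress, so instead I would argue that a closed holomorphic $1$-form $\beta = \partial f$ satisfies $\beta = \partial f$ with $f$ real, hence $\beta + \bar\beta = df$ is exact, hence $\beta$ is exact as a complex form; but an exact holomorphic $1$-form on a connected manifold that is also $\partial$ of a globally defined real function must vanish (compare the $(1,0)$ and $(0,1)$ parts of $d f = \beta + \bar\beta$ with $\beta$ of type $(1,0)$: $df$ of type... forces $\bar\partial f = \bar\beta$, and $d\beta = 0$ with $\beta$ exact and $V$ connected, combined with the reality of $f$, pins down $f$ up to a constant). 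I would present this last implication carefully, since it is the crux and the only place the topological hypothesis is used.
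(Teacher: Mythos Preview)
Your approach has a genuine gap: the step ``$f$ pluriharmonic and $H^1_{DR}(V,\mathbb{R})=0$ $\Rightarrow$ $f$ constant'' is false on noncompact manifolds, and your attempted patch in the last paragraph does not rescue it. Concretely, take $V=\mathbb{C}^n$ with the flat metric and $\Omega_0=e^{z_1}\,dz_1\wedge\cdots\wedge dz_n$. This is a nowhere-vanishing holomorphic $n$-form, and the ratio $(-1)^{n(n-1)/2}(\sqrt{-1}/2)^n\,\Omega_0\wedge\overline{\Omega_0}\big/(\omega^n/n!)$ equals $e^{2\operatorname{Re} z_1}$, so $f=\operatorname{Re} z_1$, which is pluriharmonic but not constant. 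Your claim that ``an exact holomorphic $1$-form that is $\partial$ of a globally defined real function must vanish'' fails already here: $\partial f=\tfrac12\,dz_1$ is exact and holomorphic yet nonzero. Thus you cannot in general correct $\Omega_0$ by a \emph{constant} rescaling.

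What the paper does instead is rescale by a nonconstant function together with a phase: set $\Omega=h^{-1/2}e^{\sqrt{-1}\rho}\Omega_0$ with $h=e^{2f}$, so that the normalization \eqref{normalization} holds automatically, and then solve for a real $\rho$ making $\Omega$ holomorphic. Since $d\Omega=h^{-1/2}e^{\sqrt{-1}\rho}\bigl(-\tfrac12\bar\partial\log h+\sqrt{-1}\,\bar\partial\rho\bigr)\wedge\Omega_0$, one needs $\bar\partial(-\tfrac12\log h+\sqrt{-1}\rho)=0$. Ricci-flatness gives $dd^c\log h=0$, so $d^c\log h$ is a closed real $1$-form; the hypothesis $H^1_{DR}(V,\mathbb{R})=0$ then provides a real $\hat\rho$ with $d^c\log h=d\hat\rho$, and comparing $(0,1)$-parts yields $\bar\partial(\log h-\sqrt{-1}\hat\rho)=0$. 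Taking $\rho=\hat\rho/2$ finishes the argument. The moral is that $H^1=0$ is used not to kill $f$ but to produce its pluriharmonic conjugate $\rho$, so that $-f+\sqrt{-1}\rho$ is holomorphic and $e^{-f+\sqrt{-1}\rho}\Omega_0$ is the desired $\Omega$.
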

\begin{proof}
From the assumption there is a nowhere vanishing holomorphic $n$ form $\Omega_0\in\Omega^{(n,0)}(V)$ on $V$.
Since $\Omega_0$ is holomorphic, $d\Omega_0 = 0$.
The K\"ahler form $\omega$ on $V$ induces a hermitian metric on $K_{V}$ by
\begin{eqnarray}
h := h(\Omega_0, \overline{\Omega_0}) := n!(-1)^{\frac{n(n-1)}{2}}\bigg( \frac{\sqrt{-1}}{2} \bigg)^n \frac{\Omega_0\wedge \overline{\Omega_0}}{\omega^n}.\nonumber
\end{eqnarray}
Now we put $\Omega := h^{-\frac{1}{2}}e^{\sqrt{-1}\rho}\Omega_0$ for $\rho\in C^{\infty}(V,\mathbb{R})$, which satisfies the equation (\ref{normalization}). 
Then it suffices to show that there exists $\rho\in C^{\infty}(V,\mathbb{R})$ such that $d\Omega =0$.
From $d\Omega_0 = 0$, we have
\begin{eqnarray}
d\Omega &=& \bar{\partial}(h^{-\frac{1}{2}}e^{\sqrt{-1}\rho})\wedge\Omega_0\nonumber\\
&=& h^{-\frac{1}{2}}e^{\sqrt{-1}\rho}(-\frac{1}{2}h^{-1}\bar{\partial} h + \sqrt{-1} \bar{\partial} \rho)\wedge\Omega_0\nonumber\\
&=& h^{-\frac{1}{2}}e^{\sqrt{-1}\rho}(-\frac{1}{2}\bar{\partial} \log h + \sqrt{-1} \bar{\partial} \rho)\wedge\Omega_0.\nonumber
\end{eqnarray}
Thus the problem is reduced to show the existence of the function $\rho$ which satisfies $\bar{\partial} ( -\frac{1}{2} \log h + \sqrt{-1}  \rho) = 0$.

Recall that $\omega$ is Ricci-flat K\"ahler form. Then the curvature form of the Hermitian connection on $K_V$ induced from $h$ is equal to zero, we have $dd^c \log h = 0$. Now we have assumed $H^1_{DR}(V,\mathbb{R}) = 0$, there exists $\hat{\rho}\in C^{\infty}(V,\mathbb{R})$ such that $d^c \log h = (\sqrt{-1}\partial - \sqrt{-1}\bar{\partial}) \log h = d \hat{\rho} = (\partial + \bar{\partial}) \hat{\rho}$. By comparing $(0,1)$-part, we have $\bar{\partial} (\log h - \sqrt{-1}\hat{\rho}) = 0$, consequently we obtain the assertion by putting $\hat{\rho} = 2\rho$.
\end{proof}
From now on suppose $(N,g,\xi)$ is a Sasaki-Einstein manifold of dimension $2n-1$, hence the K\"ahler structure $\omega$ on $C(N)$ is Ricci-flat.
Moreover we assume the canonical bundle $K_{C(N)}$ is trivial.
Since $(N,g)$ is an Einstein manifold with positive Ricci curvature, then $H^1(C(N),\mathbb{R}) = H^1(N,\mathbb{R}) = 0$. Therefore we have a holomorphic $n$-form $\Omega$ on $C(N)$ satisfying (\ref{normalization}).

Now we denote by $\tilde{H}$ and $H$ the mean curvature vector of $C(\Sigma)\subset C(N)$ and $\Sigma\subset N$, respectively. Then the direct calculation gives $\tilde{H} = r^{-2}H$, therefore $C(\Sigma)$ is minimal if and only if $\Sigma$ is minimal.

It is well known that the mean curvature of a Lagrangian submanifold embedded in a Calabi-Yau manifold is equal to $d\theta$ under the identification of vector fields and $1$-forms by the symplectic form, where $\theta$ is the Lagrangian angle. 
Then the Lagrangian submanifold embedded in the Calabi-Yau manifold is minimal if and only if the Lagrangian angle is constant. 
In particular it is special Lagrangian if the Lagrangian angle is equal to zero.
Hence $\Sigma\subset N$ is minimal Legendrian if and only if $C(\Sigma)\subset C(N)$ is Lagrangian with constant Lagrangian angle.

In \cite{Ohnita}, the infinitesimal deformation spaces of minimal Legendrian submanifolds embedded in $\eta$-Sasaki-Einstein manifolds are studied.
Here we observe the infinitesimal deformation spaces of special Lagrangian cones in $C(N)$, using the results obtained in \cite{McLean}.

Let $C(\Sigma)$ be a special Lagrangian submanifold in $C(N)$, and we have orthogonal decompositions $TC(N)|_{C(\Sigma)} = TC(\Sigma) \oplus NC(\Sigma)$ and $TN|_\Sigma = T\Sigma \oplus N\Sigma$, where $N\Sigma,NC(\Sigma)$ are normal bundles. 
Then for any $(x,r)\in C(\Sigma)$ we have the natural identification $N_{(x,r)}C(\Sigma) = N_x\Sigma$.

The infinitesimal deformations of cone submanifolds of $C(N)$ is generated by the smooth $1$-parameter families of cone submanifolds $\{C(\Sigma_t) = {\pi_N}^{-1}(\Sigma_t);\ -\varepsilon < t <\varepsilon \}$, where $\{\Sigma_t;\ -\varepsilon < t <\varepsilon \}$ is the smooth families of submanifolds of $N$ which satisfies $\Sigma_0=\Sigma$, and $\pi_N:N\times\mathbb{R}^+ \to N$ is the projection onto the first component.
Since the infinitesimal deformations of $\Sigma \subset N$ are parameterized by smooth sections of $N\Sigma$, the infinitesimal deformations of cone submanifolds are parameterized by
\begin{eqnarray}
\mathcal{A}_{C(\Sigma)} &:=& \{ \alpha = {\pi_N}^*\alpha_0 \in \Gamma (NC(\Sigma)) ; \ \alpha_0\in \Gamma (N\Sigma)\}.\nonumber
\end{eqnarray}
Then $\alpha_{(x,r)}\in N_{(x,r)}C(\Sigma) = N_x\Sigma$ is independent of $r$ for each $\alpha \in \mathcal{A}_{C(\Sigma)}$.

Since $C(\Sigma)$ is Lagrangian, $NC(\Sigma)$ is identified with the cotangent bundle $T^*C(\Sigma)$ by the bundle isomorphism $\hat{\omega}:NC(\Sigma) \to T^*C(\Sigma)$ defined by $\hat{\omega}(v) := \iota_v\hat{\omega} = \hat{\omega} (v,\cdot)$.

By the results in \cite{McLean}, the infinitesimal deformations of special Lagrangian submanifolds of $C(\Sigma)$ are parameterized by harmonic $1$-forms on $C(\Sigma)$.
Thus the infinitesimal deformations of special Lagrangian cones of $C(\Sigma)$ are parameterized by
\begin{eqnarray}
\mathcal{H}_{C(\Sigma)} := \{\hat{\omega} (\alpha) \in \Omega^1(C(\Sigma));\ \alpha \in \mathcal{A}_{C(\Sigma)},\ d\hat{\omega} (\alpha) = d*\hat{\omega} (\alpha) = 0\},\nonumber
\end{eqnarray}
where $\hat\omega$ is the isomorphism induced by $\omega$, and $*$ is the Hodge star with respect to the induced metric $\bar{g}|_{C(\Sigma)}$.
To study the vector space $\mathcal{H}_{C(\Sigma)}$, we need the next lemma.
\begin{lemma}\label{normal.cot.}
Under the natural identification $T^*_{(x,r)}C(\Sigma) = T^*_x\Sigma \oplus T^*_r\mathbb{R}^+$, we have
\begin{eqnarray}
\hat{\omega} (\mathcal{A}_{C(\Sigma)}) = \{ \beta_{(x,r)} = r\varphi (x)dr + r^2 \gamma_x \in \Omega^1(C(\Sigma));\ \varphi \in C^{\infty}(\Sigma),\ \gamma \in \Omega^1(\Sigma) \}.\nonumber
\end{eqnarray}
\end{lemma}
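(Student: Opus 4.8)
The plan is to unwind the definition of $\hat\omega$ applied to an element $\alpha = \pi_N^\ast\alpha_0 \in \mathcal{A}_{C(\Sigma)}$ and compute the resulting $1$-form in terms of data on $\Sigma$. Recall that for $v\in N_{(x,r)}C(\Sigma)$ we have $\hat\omega(v) = \iota_v\omega$, and that $\omega = d(r^2\eta)$. First I would fix a point $(x,r)\in C(\Sigma)$ and decompose the normal vector: since $C(\Sigma)\subset C(N)$ is Lagrangian, the normal bundle $NC(\Sigma)$ is spanned by $J$ applied to tangent vectors; more concretely, tangent vectors to $C(\Sigma)$ at $(x,r)$ are spanned by $r\tfrac{\partial}{\partial r}$ together with the lifts of $T_x\Sigma$, and the Reeb field $\xi = J(r\tfrac{\partial}{\partial r})$ together with $J(T\Sigma)$ span the normal directions (using that $\Sigma$ is Legendrian, so $\xi\perp T\Sigma$ and $J$ maps $T\Sigma$ to the normal bundle of $C(\Sigma)$). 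An element $\alpha_0\in\Gamma(N\Sigma)$, viewed in $N_xC(\Sigma)=N_x\Sigma$ and independent of $r$, therefore has a component along $\xi$, say $\varphi(x)\xi$ for some $\varphi\in C^\infty(\Sigma)$, plus a component $J w$ with $w\in T_x\Sigma$; the latter I will package as a $1$-form $\gamma\in\Omega^1(\Sigma)$ via the metric $g$ on $\Sigma$.

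Next I would compute $\iota_\alpha\omega$ directly. Using $\omega = d(r^2\eta) = 2r\,dr\wedge\eta + r^2 d\eta$, and contracting with the $\xi$-component: since $\eta(\xi)=1$ and $\iota_\xi d\eta = 0$, one gets $\iota_{\varphi(x)\xi}\omega = \varphi(x)(2r\,dr\cdot(\text{something}) \ldots)$ — more carefully, $\iota_\xi(2r\,dr\wedge\eta) = 2r(dr\,\eta(\xi) - \eta\, dr(\xi)) = 2r\,dr$ since $dr(\xi)=0$, and $\iota_\xi(r^2 d\eta)=0$, so $\iota_{\varphi\xi}\omega = 2r\varphi\,dr$. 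Contracting with the $Jw$-component with $w\in T_x\Sigma$: here $\iota_{Jw}(2r\,dr\wedge\eta)$ involves $dr(Jw)$ and $\eta(Jw)$; one checks $dr(Jw)=0$ and $\eta(Jw) = g(\xi, Jw) = \bar g$-inner products that vanish by the Lagrangian/Legendrian relations, while $\iota_{Jw}(r^2 d\eta) = r^2\,\iota_{Jw}d\eta$ which, restricted to $T\Sigma\oplus\langle\tfrac{\partial}{\partial r}\rangle$, yields $r^2$ times a $1$-form on $\Sigma$ that one identifies with $\gamma_x$ (up to a harmless constant absorbed into the definition of $\gamma$). Collecting terms gives $\hat\omega(\alpha)_{(x,r)} = r\varphi(x)\,dr + r^2\gamma_x$, which is exactly the claimed form; conversely every such $1$-form arises this way since $\varphi$ and $\gamma$ range over all of $C^\infty(\Sigma)$ and $\Omega^1(\Sigma)$, and the map $\alpha_0\mapsto(\varphi,\gamma)$ is a bijection $\Gamma(N\Sigma)\xrightarrow{\sim}C^\infty(\Sigma)\oplus\Omega^1(\Sigma)$ because $N\Sigma\cong\langle\xi\rangle\oplus J(T\Sigma)$ as bundles over $\Sigma$.

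The main obstacle I anticipate is bookkeeping the identifications correctly: one must be careful that $N_{(x,r)}C(\Sigma)$ is genuinely $r$-independent as a subbundle of $TC(N)|_{C(\Sigma)}$ (which follows because the cone structure and $J$ are homogeneous under the $r$-scaling, and $\Sigma$ sits at $r=1$), and one must track exactly how the factor $r^2$ in $\omega = r^2 d\eta + 2r\,dr\wedge\eta$ distributes between the $dr$-component and the $\Omega^1(\Sigma)$-component. The powers $r$ and $r^2$ in the statement are the signature of this computation: the $dr$ term picks up a single power of $r$ from the $2r\,dr\wedge\eta$ piece paired with the $\xi$-direction, while the tangential term picks up $r^2$ from the $r^2 d\eta$ piece. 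Everything else — that the resulting form is smooth, that the correspondence is linear and bijective — is routine once the pointwise computation is in place.
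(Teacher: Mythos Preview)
Your direct computation is correct: decomposing $\alpha_0\in\Gamma(N\Sigma)$ along $\langle\xi\rangle\oplus J(T\Sigma)$ and contracting against $\omega=2r\,dr\wedge\eta+r^2d\eta$ does yield exactly the forms $r\varphi\,dr+r^2\gamma$ (after absorbing the harmless factor of $2$ into $\varphi$), and the surjectivity follows from the bundle splitting you describe. So the argument goes through.

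However, the paper's proof proceeds quite differently, and the contrast is worth noting. Rather than computing $\iota_\alpha\omega$ directly, the paper exploits the dilation $m_a:(p,r)\mapsto(p,ar)$: it first checks that $m_a$ is holomorphic (via $\mathcal L_{r\partial_r}J=0$), observes that $m_a^\ast\omega=a^2\omega$ while $(m_a)_\ast\alpha=\alpha$ exactly characterizes $\alpha\in\mathcal A_{C(\Sigma)}$, and concludes that $\hat\omega(\mathcal A_{C(\Sigma)})$ consists precisely of the $1$-forms $\beta$ with $m_a^\ast\beta=a^2\beta$. Writing $\beta=\sigma+\tau\,dr$ and imposing this homogeneity immediately forces $\sigma=r^2\gamma$ and $\tau=r\varphi$. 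The trade-off: your approach is hands-on and makes the identification $(\varphi,\gamma)\leftrightarrow\alpha_0$ completely explicit, while the paper's scaling argument avoids ever splitting the normal bundle or contracting against $d\eta$, relying only on the cone's homogeneity. The scaling method is cleaner and would generalize more readily to other homogeneous structures, whereas yours gives more geometric insight into where each term comes from.
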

\begin{proof}
Define a diffeomorphism $m_a = \exp (ar\frac{\partial}{\partial r}):C(N)\to C(N)$ by $m_a(p,r) = (p,ar)$ for $a>0$.
First of all we show that $m_a$ is a biholomorphism.
Since $\frac{d}{da}(m_a)_*J = (m_a)_*\mathcal{L}_{r\frac{\partial}{\partial r}}J$, it suffices to show $\mathcal{L}_{r\frac{\partial}{\partial r}}J = 0$. Now we may write $r\frac{\partial}{\partial r} = -J\xi$, then for any $x\in C(N)$ and open neighborhood $x\in U\subset C(N)$ and $v\in\mathcal{X}(C(N))$,
\begin{eqnarray}
(\mathcal{L}_{J\xi}J)(v) &=& \mathcal{L}_{J\xi}(Jv) - J(\mathcal{L}_{J\xi}v)\nonumber\\
&=& [J\xi,Jv] - J([J\xi,v])\nonumber\\
&=& -N_J(\xi,v) - J^2[\xi,v] + J[\xi,Jv]\nonumber\\
&=& -N_J(\xi,v) + J\{(\mathcal{L}_{\xi}J)(v)\},\nonumber
\end{eqnarray}
where $N_J$ is the Nijenhuis tensor. Thus we have $\mathcal{L}_{J\xi}J = 0$ since $J$ is integrable and $\mathcal{L}_{\xi}J = 0$, hence $m_a$ is a biholomorphism.

Next we show that 
\begin{eqnarray}
\hat{\omega} (\mathcal{A}_{C(\Sigma)}) = \{ \beta\in \Omega^1(C(\Sigma)) ;\ {m_a}^*\beta = a^2\beta\ {\rm for\ all\ }a\in\mathbb{R}^+\}.\nonumber
\end{eqnarray}
Since $m_a$ satisfies ${m_a}^*\bar{g} = {m_a}^*(dr^2 + r^2g) = d(ar)^2 + (ar)^2g = a^2\bar{g}$, we obtain ${m_a}^*\omega = a^2\omega$.
By the definition of $\mathcal{A}_{C(\Sigma)}$, we may write
\begin{eqnarray}
\mathcal{A}_{C(\Sigma)} = \{ \alpha\in \Omega^1(C(\Sigma)) ;\ (m_a)_*\alpha = \alpha \ {\rm for\ all\ }a\in\mathbb{R}^+\}.\nonumber
\end{eqnarray}
For any $\alpha \in \Gamma(NC(\Sigma))$, we have
\begin{eqnarray}
m_a^*(\hat{\omega}(\alpha)) &=& m_a^*(\iota_{\alpha}\omega) = \iota_{({m_a})_*^{-1}\alpha}m_a^*\omega = a^2\hat{\omega} (({m_a})_*^{-1}\alpha)\nonumber\\
&=& a^2 \hat{\omega}(\alpha) + a^2\hat{\omega} (({m_a})_*^{-1}\alpha - \alpha).\nonumber
\end{eqnarray}
Therefore the equation $m_a^*(\hat{\omega}(\alpha)) = a^2 \hat{\omega}(\alpha) $ holds for all $a\in\mathbb{R}^+$ if and only if $\alpha\in\mathcal{A}_{C(\Sigma)}$.

Now we take $\beta\in\Omega^1(C(\Sigma))$ and decompose it as $\beta_{(x,r)} = \sigma(x,r) + \tau(x,r)dr$ such that $\sigma(x,r)\in T^*_x\Sigma$ and $\tau\in C^{\infty}(C(\Sigma))$. 
\begin{eqnarray}
{m_a}^*\beta = {m_a}^*\sigma + {m_a}^*\tau \cdot adr,\nonumber
\end{eqnarray}
then ${m_a}^*\beta = a^2\beta$ is equivalent to
\begin{eqnarray}
\sigma(x,ar) &=& a^2\sigma(x,r),\nonumber\\
\tau (x,ar) &=& a \tau (x,r).\nonumber
\end{eqnarray}
Thus we may put $\sigma = r^2\gamma$ and $\tau = r\varphi$ for some $\gamma\in\Omega^1(\Sigma)$ and $\varphi\in C^{\infty}(\Sigma)$.
\end{proof}

\begin{theorem}\label{infin.deform.sp}
The vector space $\mathcal{H}_{C(\Sigma)}$ is isomorphic to
\begin{eqnarray}
{\rm Ker}(\Delta_\Sigma - 2n) = \{\varphi \in C^{\infty}(\Sigma);\ \Delta_\Sigma \varphi = 2n \varphi \},\nonumber
\end{eqnarray}
where $\Delta_\Sigma = d^{*_{\Sigma}}d$ and $d^{*_{\Sigma}}$ is a formal adjoint operator of $d$ with respect to the metric $\bar{g}|_{\Sigma}$.
\end{theorem}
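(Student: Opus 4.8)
The plan is to take an arbitrary element of $\mathcal{H}_{C(\Sigma)}$, put it in the normal form given by Lemma~\ref{normal.cot.}, impose the two harmonicity conditions $d\beta=0$ and $d^{*}\beta=0$ relative to the cone metric $\bar g|_{C(\Sigma)}=dr^2+r^2 g_\Sigma$ (where $g_\Sigma:=\bar g|_\Sigma$; recall $\dim\Sigma=n-1$ and $\dim C(\Sigma)=n$), and extract the single equation on $\Sigma$ that remains. The isomorphism will be the map $\beta=r\varphi\,dr+r^2\gamma\mapsto\varphi$.

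First I would expand the exterior derivative: for $\beta_{(x,r)}=r\varphi(x)\,dr+r^2\gamma_x$ one gets $d\beta=r\,dr\wedge(2\gamma-d_\Sigma\varphi)+r^2\,d_\Sigma\gamma$. The first summand is $dr$ wedged with a form pulled back from $\Sigma$ and the second is pulled back from $\Sigma$, so $d\beta=0$ is equivalent to $2\gamma-d_\Sigma\varphi=0$ and $d_\Sigma\gamma=0$, and the first of these already forces the second. Hence closedness is exactly the relation $\gamma=\tfrac12 d_\Sigma\varphi$, and $\varphi$ now determines $\gamma$.

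Next I would compute the codifferential on the cone. Using $dv_{\bar g}=r^{n-1}\,dr\wedge dv_{g_\Sigma}$, $\bar g^{rr}=1$ and $\bar g^{ij}=r^{-2}g_\Sigma^{ij}$ in $d^{*}\beta=-\mathrm{div}_{\bar g}(\beta^{\sharp})$, one finds $d^{*}\beta=-n\varphi+d^{*_\Sigma}\gamma$, the coefficient $n$ coming from $\partial_r(r^{n})=n\,r^{n-1}$ together with $\dim C(\Sigma)=n$. Substituting the closedness relation $\gamma=\tfrac12 d_\Sigma\varphi$ converts $d^{*}\beta=0$ into $\tfrac12\Delta_\Sigma\varphi=n\varphi$, that is $\Delta_\Sigma\varphi=2n\varphi$. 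Conversely, for any $\varphi$ with $\Delta_\Sigma\varphi=2n\varphi$, the $1$-form $r\varphi\,dr+\tfrac{r^2}{2}d_\Sigma\varphi$ lies in $\hat\omega(\mathcal{A}_{C(\Sigma)})$ by Lemma~\ref{normal.cot.} and is closed and coclosed by the two identities just derived, hence belongs to $\mathcal{H}_{C(\Sigma)}$; and $\beta\mapsto\varphi$ is clearly linear and injective (if $\varphi=0$ then $\gamma=\tfrac12 d_\Sigma\varphi=0$, so $\beta=0$). Putting these together yields the asserted isomorphism $\mathcal{H}_{C(\Sigma)}\cong{\rm Ker}(\Delta_\Sigma-2n)$.

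I expect the only real subtlety to be the Hodge bookkeeping on the warped cone metric: one must keep the weight $r^{n-1}$ in $dv_{\bar g}$ and the factor $r^{-2}$ in the inverse metric along $\Sigma$ straight, since it is precisely their interplay that produces the eigenvalue $2n$ and not, say, $2(n-1)$ or $2n-1$. A minor point worth recording is that McLean's identification of infinitesimal special Lagrangian deformations with harmonic $1$-forms, quoted just before Lemma~\ref{normal.cot.}, is being used here for the noncompact cone $C(\Sigma)$ in exactly the shape built into the definition of $\mathcal{H}_{C(\Sigma)}$.
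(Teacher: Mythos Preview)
Your proof is correct and follows essentially the same route as the paper: write $\beta=r\varphi\,dr+r^2\gamma$ via Lemma~\ref{normal.cot.}, read off $\gamma=\tfrac12 d_\Sigma\varphi$ from $d\beta=0$, and then reduce coclosedness to $\Delta_\Sigma\varphi=2n\varphi$. The only cosmetic difference is that you compute $d^{*}\beta$ via $-\mathrm{div}_{\bar g}(\beta^{\sharp})$, whereas the paper computes $d*\beta$ directly through the Hodge star formulas $*\gamma=-r^{n-3}dr\wedge *_\Sigma\gamma$ and $*dr=r^{n-1}\mathrm{vol}_\Sigma$; the two computations are equivalent and yield the same eigenvalue $2n$.
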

\begin{proof}
From Lemma \ref{normal.cot.}, all $\beta\in \hat{\omega} (\mathcal{A}_{C(\Sigma)})$ can be written as $\beta = r\varphi dr + r^2 \gamma$. Then we have
\begin{eqnarray}
d\beta = rdr\wedge (2\gamma -d\varphi) + r^2d\gamma,\nonumber
\end{eqnarray}
from which it follows that $d\beta = 0$ is equivalent to $2\gamma = d\varphi$.

Next we calculate $d*\beta$.
Denote by ${\rm vol}_{\Sigma}$ the volume form of $g|_{\Sigma}$. Since the volume forms of $\bar{g}|_{C(\Sigma)}$ is given by $r^{n-1}dr\wedge {\rm vol}_{\Sigma}$, we can deduce
\begin{eqnarray}
*\gamma &=& -r^{n-3}dr \wedge *_{\Sigma}\gamma,\nonumber\\
*dr &=& r^{n-1}{\rm vol}_{\Sigma},\nonumber
\end{eqnarray}
where $*_{\Sigma}$ is the Hodge star operator with respect to $g|_{\Sigma}$.
Consequently, we obtain
\begin{eqnarray}
d*\beta = r^{n-1}dr \wedge (d*_{\Sigma}\gamma + n\varphi{\rm vol}_{\Sigma}).\nonumber
\end{eqnarray}
Hence $d\beta = d*\beta = 0$ is equivalent to
\begin{eqnarray}
\gamma = \frac{1}{2}d\varphi,\quad n\varphi {\rm vol}_\Sigma + \frac{1}{2}d*_\Sigma d\varphi = 0,\nonumber
\end{eqnarray}
and the latter equation is equivalent to $d^{*_{\Sigma}}d\varphi = 2n \varphi$.
\end{proof}

In \cite{Ohnita}, the infinitesimal deformation spaces of minimal Legendrian submanifolds in Sasaki-Einstein manifolds are studied.
Proposition \ref{infin.deform.sp} is also obtained from the case of $\eta$-Ricci constant $A$ is  equal to $2n-2$ in \cite{Ohnita}.
Here we should pay attention that the dimension of infinitesimal deformation spaces obtained in \cite{Ohnita} is equal to $1 + \dim {\rm Ker}(\Delta_\Sigma - 2n)$, 
since the deformations of $C(\Sigma)$ generated by Reeb vector field $\xi$ is not special Lagrangian cone, 
but minimal Lagrangian cone whose Lagrangian angle is not equal to zero.
Actually, if we put $\alpha = \xi$, then $\beta = \hat{\omega}(\alpha) = -rdr$ and
\begin{eqnarray}
d*\beta = -d(r^n {\rm vol}_\Sigma ) = -nr^{n-1}dr\wedge {\rm vol}_\Sigma \neq 0,\nonumber
\end{eqnarray}
accordingly this $\alpha$ does not generate deformations of special Lagrangian cones.

\bibliographystyle{amsalpha}

\end{document}